\newtheorem{theorem}{Theorem}[section]
\newtheorem{lemma}[theorem]{Lemma}
\newtheorem{corollary}[theorem]{Corollary}
\theoremstyle{definition}
\newtheorem{proposition}[theorem]{Proposition}
\newtheorem{example}[theorem]{Example}
\newcommand{\defn}[1]{{\em #1}}
\theoremstyle{remark}
\title{Commutative association schemes obtained from twin prime powers, Fermat primes,  Mersenne primes}
\date{\today}
\author{
 Hadi Kharaghani\thanks{Department of Mathematics and Computer Science, University of Lethbridge,
Lethbridge, Alberta, T1K 3M4, Canada.  \texttt{kharaghani@uleth.ca}} 
\and
 Sho Suda\thanks{Department of Mathematics, National Defense Academy of Japan, Yokosuka,  239-8686, Japan. \texttt{ssuda@nda.ac.jp}}
% Sho Suda\thanks{Department of Mathematics Education,  Aichi University of Education, 1 Hirosawa, Igaya-cho, Kariya, Aichi 448-8542, Japan. \texttt{suda@auecc.aichi-edu.ac.jp}}
}
\begin{document}
\maketitle
\abstract{
For prime powers $q$ and $q+\varepsilon$ where $\varepsilon\in\{1,2\}$,  
an affine resolvable design 
from $\mathbb{F}_q$ and Latin squares from $\mathbb{F}_{q+\varepsilon}$ yield a set of symmetric designs if $\varepsilon=2$ and a set of symmetric group divisible designs if $\varepsilon=1$.
We show that these designs derive commutative association schemes, and determine their eigenmatrices.  
}

{\bf Keywords} Symmetric design; symmetric group divisible design; association scheme; Latin square

\section{Introduction}
By \defn{twin prime powers}, we mean a pair of integers $q$ and $q+2$, each of which is a prime power. 
Twin prime powers have been used to construct Hadamard difference sets and Hadamard matrices. 
Let $q$ and $q+2$ be twin prime powers, and let $\mathbb{F}_q$ and $\mathbb{F}_{q+2}$ be the finite fields of order $q$ and $q+2$ respectively. 
Let $\chi_q^{(2)}$ and $\chi_{q+2}^{(2)}$ be the quadratic characters of $\mathbb{F}_q$ and $\mathbb{F}_{q+2}$ respectively. 
Define $X=\mathbb{F}_{q}\times\mathbb{F}_{q+2}$ and  
\begin{align*}
R_0&=\{(x,x)\mid x\in X \},\\
R_1&=\{((x_1,y_1),(x_2,y_2))\in X\times X \mid \chi_q^{(2)}(x_1-x_2)\chi_{q+2}^{(2)}(y_1-y_2)=1 \},\\
R_2&=\{((x_1,y_1),(x_2,y_2))\in X\times X \mid \chi_q^{(2)}(x_1-x_2)\chi_{q+2}^{(2)}(y_1-y_2)=-1 \},\\
R_3&=\{((x_1,y),(x_2,y))\in X\times X \mid x_1\neq x_2 \},\\
R_4&=\{((x,y_1),(x,y_2))\in X\times X \mid y_1\neq y_2 \}. 
\end{align*} 
Then it is easy to see that $(X,\{R_i\}_{i=0}^4)$ is a translation commutative association scheme with $4$ classes. 
Note that by denoting $R_i(x)=\{y\in X\mid (x,y)\in R_i\}$ and ${\bf0}=(0,0)\in X$, $R_0({\bf0})\cup R_1({\bf0})\cup R_3({\bf0})$ is a twin prime powers difference set with parameters $(q(q+2),(q^2+2q-1)/2,(q+3)(q-1)/4)$ \cite{SS}, see also \cite{B71}. 

In this paper, we make use of twin prime powers to construct a translation commutative association scheme with vertex set $\mathbb{F}_{q+2}\times \mathbb{F}_{q}\times \mathbb{F}_{q}$ and with $q+3$ classes. 
Our association schemes are based on the construction of symmetric designs due to Wallis \cite{W} obtained from an affine resolvable design and a Latin square. 
See also \cite[Theorem~5.23]{S} for the construction. 
%For the finite field $\mathbb{F}_q$ an affine resolvable design $(V,\mathcal{B})$ is attached. 
%For the finite field $\mathbb{F}_q$ and any element $\beta\in\mathbb{F}_{q+2}$, a $(q+2)\times(q+2)$ matrix $L_{\beta}=(\beta(x-y))_{x,y\in\mathbb{F}_{q+2}}$ is a Latin square on the symbol set $\mathbb{F}_{q+2}$.  
%It is well known that an affine resolvable design and a Latin square give rise to a symmetric design \cite{W}, see also \cite[Theorem~5.23]{S}. 
Applying this result to twin prime powers, we obtain a collection of symmetric $(q^2(q+2),q(q+1),q)$-designs with incidence matrices $N_{\beta}$ for $\beta\in\mathbb{F}_{q+2}^*$, where $\mathbb{F}_{r}^*=\mathbb{F}_{r}\setminus\{0\}$ for any prime power $r$. 
We will show that $N_{\beta}$, $\beta\in\mathbb{F}_{q+2}^*$, are commuting. 
Note that these symmetric designs are an example of \defn{linked systems of symmetric group divisible designs of type II} in \cite{KS2} and that a similar result based on these symmetric designs for the finite field $\mathbb{F}_{2^m}$ is obtained in \cite{KSS}.  
The matrices $N_{\beta}$ ($\beta\in\mathbb{F}_{q+2}^*$) share $(J_{q+2}-I_{q+2})\otimes I_{q^2}$ and it holds that $\sum_{\beta\in\mathbb{F}_{q+2}^*}N_\beta-(q+1)(J_{q+2}-I_{q+2})\otimes I_{q^2}=I_{q+2}\otimes J_{q^2}$, 
where $I_n,J_n$ denote the identity matrix of order $n$, and the all-ones matrix of order $n$,  respectively.    
One of our main results is Theorem~\ref{thm:as} that the matrices $I_{(q+2)q^2}, (J_{q+2}-I_{q+2})\otimes I_{q^2},I_{q+2}\otimes (J_{q^2}-I_{q^2})$, $N_\beta-(J_{q+2}-I_{q+2})\otimes I_{q^2}$ ($\beta\in\mathbb{F}_{q+2}^*$) form a commutative association scheme with $q+3$ classes. 

Furthermore, extending the idea of the construction of symmetric designs by Wallis, we obtain a set of symmetric group divisible designs from prime powers $q$ and $q+1$. 
Note that by a solution of famous Catalan's conjecture \cite{P}, $q$ and $q+1$ are both prime powers if and only if one of the following holds:
\begin{enumerate}
\item $q=2^{2^m}$ for some positive integer $m$ and $q+1$ is a prime number, which is called a \defn{Fermat prime}. 
\item $q+1=2^m$ for some positive integer $m$ and $q$ is a prime, which is called a \defn{Mersenne prime}. 
\item $q=8$. 
\end{enumerate}
In a similar manner to twin prime powers case, we obtain a commutative association scheme with $q+4$ classes in Theorem~\ref{thm:mfas}.

The organization of the paper is as follows. 
In Section~\ref{sec:pre}, we recall the definition of symmetric designs, symmetric group divisible designs, commutative association schemes and their eigenmatrices. We also prepare the results on auxiliary matrices and Latin squares obtained from finite fields needed later. 
We then construct commutative association schemes with $q+3$ classes from twin prime powers in Section~\ref{sec:astp}, and those with $q+4$ classes from Fermat primes or Mersenne primes in Section~\ref{sec:asmfp}. 
%%%%%%%%%%%%%%%%%%%%%%%%%%%%%%%%%%%%%%%%%%%%%%%%%%%%%%%%%%%%%%%%%%%%%%%%%%%%%%%%%%%%%%%%%%%%%%%%%%%%%
\section{Preliminaries}\label{sec:pre}
\subsection{Symmetric designs, symmetric group divisible designs}
Let $m,n\geq 2$ be integers. 
A (\emph{square}) \emph{group divisible design with parameters $(v,k,m,n,\lambda_1,\lambda_2)$} is a pair $(V,\mathcal{B})$, where $V$ is a finite set of  $v$ elements called points, and $\mathcal{B}$ a collection of $k$-element subsets of $V$ called blocks with $|\mathcal{B}|=v$, in which the point set $V$ is partitioned into $m$ classes of size $n$, such that two distinct points from one class occur
together in $\lambda_1$ blocks, and two points from different classes occur together in exactly $\lambda_2$ blocks. 
A group divisible design is said to be \emph{symmetric} (or to have the \emph{dual property}) if its dual, that is %the structure interchanging the roles of points and blocks, is again a group divisible design with the same parameters.
the structure gotten by interchanging the roles of points and blocks, is again a group divisible design with the same parameters. Refer to \cite{B} for the details. 
A group divisible design is said to be \defn{proper} if $\lambda_1\neq \lambda_2$ %, \defn{improper} if $\lambda_1=\lambda_2$. 
and \defn{improper} if $\lambda_1=\lambda_2$. 
%For the case of improper, 
In the improper case,  
we set $\lambda=\lambda_1=\lambda_2$. 
Improper symmetric group divisible designs are known as \defn{symmetric $2$-$(v,k,\lambda)$ designs} or \defn{symmetric designs with parameters $(v,k,\lambda)$}. 

The \defn{incidence matrix} of a symmetric group divisible design $(V,\mathcal{B})$ is a $v\times v$ $(0,1)$-matrix $A$ with rows and columns indexed by $\mathcal{B},V$ respectively such that for $x\in V, b\in \mathcal{B}$, 
\begin{align*}
A_{b,x}=\begin{cases}
1 & \text{ if } x\in b,\\
0 & \text{ if } x\not\in b.
\end{cases}
\end{align*}
Let $A$ be the incidence matrix of a symmetric group divisible design with parameters $(v,k,m,n,\lambda_1,\lambda_2)$. 
Then, after reordering the elements of $V$ and $\mathcal{B}$ appropriately,  
\begin{align}\label{eq:gdd}
A A^\top=A^\top A=k I_v+\lambda_1(I_m\otimes J_n-I_v)+\lambda_2(J_v-I_m\otimes J_n),  
\end{align}
where $A^\top$ is the transpose of $A$. 
We also refer to a $v\times v$ $(0,1)$-matrix $A$ satisfying \eqref{eq:gdd} as a symmetric group divisible design. 
A $v\times v$ $(0,1)$-matrix $A$ satisfying $A A^\top=A^\top A=k I_v+\lambda(J_v-I_v)$ is also referred to as a symmetric design.

\subsection{Association schemes}
Let $n$ be a positive integer. 
Let $X$ be a finite set and $R_i$ ($i\in\{0,1,\ldots,n\}$) be a nonempty subset of $X\times X$. 
The \emph{adjacency matrix} $A_i$ of the graph with vertex set $X$ and edge set $R_i$ is a $(0,1)$-matrix indexed by $X$ such that $(A_i)_{xy}=1$ if $(x,y)\in R_i$ and $(A_i)_{xy}=0$ otherwise. 
A \emph{commutative association scheme} with $n$ classes is a pair $(X,\{R_i\}_{i=0}^n)$ satisfying the following:
\begin{enumerate}[(\text{AS}1)]
\item $A_0=I_{|X|}$.
\item $\sum_{i=0}^n A_i = J_{|X|}$.
\item $A_i^\top\in\{A_1,\ldots,A_n\}$ for any $i\in\{1,\ldots,n\}$.
\item For all $i$, $j$, $A_i A_j$ is a linear combination of $A_0,A_1,\ldots,A_n$.
\item $A_i A_j=A_j A_i$ for all $i,j$. 
\end{enumerate}
We will also refer to $(0,1)$-matrices $A_0,A_1,\ldots,A_n$ satisfying (AS1)-(AS5) as a commutative association scheme. 
If a commutative association scheme satisfies that $A_i^\top=A_i$ for any $i\in\{1,\ldots,n\}$, then the association scheme is said to be \defn{symmetric}.   
The vector space spanned by $A_i$'s forms a commutative algebra, denoted by $\mathcal{A}$ and called the \emph{Bose-Mesner algebra}.
There exists a basis of $\mathcal{A}$ consisting of primitive idempotents, say $E_0=(1/|X|)J_{|X|},E_1,\ldots,E_n$. 
Note that $E_i$ is the projection onto a maximal common eigenspace of $A_0,A_1,\ldots,A_n$. 
Since  $\{A_0,A_1,\ldots,A_n\}$ and $\{E_0,E_1,\ldots,E_n\}$ are two bases in $\mathcal{A}$, there exist the change-of-bases matrices $P=(p_{ij})_{i,j=0}^n$, $Q=(q_{ij})_{i,j=0}^n$ so that
\begin{align*}
A_j=\sum_{i=0}^n p_{ij}E_i,\quad E_j=\frac{1}{|X|}\sum_{i=0}^n q_{ij}A_i.
\end{align*}
The matrices $P$ or $Q$ are said to be {\it first or second eigenmatrices} respectively. 
An association scheme is said to be {\it self-dual} if $P=\bar{Q}$ for suitable rearrangement the indices of the adjacency matrices and the primitive idempotents, where $\bar{Q}=(\bar{q_{ij}})_{i,j=0}^n$.   

The association scheme is a {\it translation association scheme} if the vertex set $X$ has the structure of an additively written abelian group, and for all $x,y,z\in X$ and  
for all $i\in\{0,1,\ldots,n\}$, 
\begin{align*}
(x,y)\in R_i\Rightarrow (x+z,y+z)\in R_i.
\end{align*}

For translation association schemes, the first eigenmatrix is calculated by the characters as follows. 
For $i\in \{0,1,\ldots,n\}$ set $N_i=R_i(0)=\{x\in X\mid(0,x)\in R_i\}$. 
For each character $\chi$ of $X$ we have 
\begin{align*}
A_i\chi=\left(\sum_{x\in N_i}\chi(x)\right)\chi.
\end{align*}
Letting $X^*$ be the dual group of $X$, set $N_j^*=\{\eta\in X^*\mid E_j\eta=\eta\}$ for $j\in\{0,1,\ldots,n\}$. 
Then the first eigenmatrix of the translation association scheme is expressed as 
\begin{align*}
p_{ij}=\sum_{x\in N_i}\chi(x) \text{ for } \chi\in N_j^*.
\end{align*}

\subsection{Auxiliary matrices from finite fields}
We denote by $\mathbb{F}_q$ the finite field of order $q$. % $\alpha_1=0,\alpha_2,\ldots,\alpha_q$.  
Let $H_q$ be the multiplicative table of $\mathbb{F}_q$, i.e., for $\alpha,\beta\in\mathbb{F}_q$, the $(\alpha,\beta)$-entry of $H_q$ is $\alpha \beta$. 
Then the matrix $H_q$ is a \defn{generalized Hadamard matrix $GH(q,1)$ over the additive group of $\mathbb{F}_q$}.  
A \emph{generalized Hadamard matrix $GH(q,1)$ over the additive group of $\mathbb{F}_q$}  is a $q\times q$ matrix $H=(h_{i,j})_{i,j=1}^{q}$ with entries from $\mathbb{F}_q$ 
such that for all distinct $i,k\in\{1,2,\ldots,q\}$, the multiset $\{h_{ij}-h_{kj}\mid 1\leq j\leq q\}$ contains exactly one time of each element of $\mathbb{F}_q$. 

From $H_q$, we have $q$ auxiliary matrices; for each $\alpha\in \mathbb{F}_q$, define a $q^2\times q^2$ $(0,1)$-matrix $C_\alpha$ whose rows and columns indexed by $\mathbb{F}_q\times \mathbb{F}_q$ to be a $q\times q$ block matrix with rows and columns indexed by $\mathbb{F}_{q}$ whose $(\alpha',\alpha'')$-block is $\phi(\alpha(-\alpha'+\alpha''))$;
\begin{align*}
C_\alpha=(\phi(\alpha(-\alpha'+\alpha'')))_{\alpha',\alpha''\in\mathbb{F}_q}, 
\end{align*}
where $\phi$ is a permutation representation of the additive group of $\mathbb{F}_q$ defined as follows.  
Letting $q=p^m$ for a prime $p$, we regard the additive group $\mathbb{F}_q$ as $\mathbb{F}_p^m$. 
Let $r_p$ be a $p\times p$ circulant matrix with the first row $(0,1,0,\ldots,0)$, and define a group homomorphism $\phi$ from the additive group $\mathbb{F}_{q}$ to $GL_{q}(\mathbb{R})$ as $\phi((x_i)_{i=1}^m)= \otimes_{i=1}^m (r_{p})^{x_i}$, where $\otimes$ is the Kronecker product.  
Furthermore, letting $x,y$ be indeterminates, we set $C_x=O_{q^2}$ and $C_{y}=I_q\otimes J_q$, where $O_{q^2}$ is the zero matrix of order $q^2$. 
We say that the matrices $C_a$ ($a\in\mathbb{F}_q\cup\{y\}$) are auxiliary matrices. 
From the auxiliary matrices $C_a$ ($a\in\mathbb{F}_q\cup\{y\}$), one can obtain an affine resolvable design, see \cite[Section~5]{S} for more details. 
See also \cite{BJL}. %for a connection to affine resolvable designs.  
We use the following properties of $C_{a}$ ($a\in\mathbb{F}_q\cup\{y\}$) in subsequent sections.  %See \cite[Lemma~2.8]{KS} for its proof. 
\begin{lemma}\label{lem:mfc}\label{lem:m1fc}
\begin{enumerate}
\item $\sum_{a\in\mathbb{F}_q}C_a=q I_{q^2}+(J_{q}-I_q)\otimes J_q$ and $\sum_{a\in\mathbb{F}_q\cup\{y\}}C_a=q I_{q^2}+J_{q^2}$.
\item For any $a\in\mathbb{F}_q\cup\{y\}$, $(C_a)^2=q C_a$.
\item For any distinct $a,a'\in\mathbb{F}_q\cup\{y\}$, $C_aC_{a'}=J_{q^2}$.
\item For $a\in\mathbb{F}_q\cup\{y\}$, $J_{q^2}C_{a}=C_{a}J_{q^2}=qJ_{q^2}$. 
\item For $a\in\mathbb{F}_q\cup\{y\}$, $(I_q\otimes J_q)C_{a}=C_{a}(I_q\otimes J_q)=J_{q^2}$. 
\end{enumerate}
\end{lemma}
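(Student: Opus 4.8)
The plan is to reduce all five identities to three elementary facts about the permutation representation $\phi$ of the additive group of $\mathbb{F}_q$, and then compute block by block. The three facts are: (i) $\phi$ is a group homomorphism with $\phi(0)=I_q$ and $\phi(u)^\top=\phi(-u)$; (ii) each $\phi(u)$ is a permutation matrix, so $\phi(u)J_q=J_q\phi(u)=J_q$; and (iii) $\sum_{u\in\mathbb{F}_q}\phi(u)=J_q$. Fact (i) is immediate from $\phi((x_i)_i)=\otimes_i r_p^{x_i}$ and $r_p^{\,p}=I_p$; fact (ii) holds because a Kronecker product of permutation matrices is a permutation matrix; and fact (iii) follows by Kronecker-factoring $\sum_{(x_i)_i\in\mathbb{F}_p^m}\otimes_i r_p^{x_i}=\otimes_i\bigl(\sum_{x=0}^{p-1}r_p^x\bigr)=\otimes_i J_p=J_q$. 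Throughout I view $C_a$, for $a\in\mathbb{F}_q$, as the $q\times q$ block matrix whose $(\alpha',\alpha'')$-block is $\phi(a(\alpha''-\alpha'))$ (so $C_0=J_q\otimes I_q$), and recall $C_y=I_q\otimes J_q$; products are computed block by block, keeping track that in the Kronecker notation the first factor carries the block index.

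For (1), the diagonal blocks of $\sum_{a\in\mathbb{F}_q}C_a$ are $\sum_a\phi(0)=qI_q$, while for $\alpha'\neq\alpha''$ the map $a\mapsto a(\alpha''-\alpha')$ is a bijection of $\mathbb{F}_q$, so the $(\alpha',\alpha'')$-block is $\sum_{u\in\mathbb{F}_q}\phi(u)=J_q$ by (iii); assembling gives $qI_{q^2}+(J_q-I_q)\otimes J_q$, and adding $C_y=I_q\otimes J_q$ converts $(J_q-I_q)\otimes J_q$ into $J_q\otimes J_q=J_{q^2}$. For (2), the case $a=y$ is $(I_q\otimes J_q)^2=I_q\otimes J_q^2=qC_y$; for $a\in\mathbb{F}_q$, the $(\alpha',\alpha'')$-block of $C_a^2$ is $\sum_{\gamma\in\mathbb{F}_q}\phi(a(\gamma-\alpha'))\phi(a(\alpha''-\gamma))=\sum_{\gamma}\phi(a(\alpha''-\alpha'))=q\,\phi(a(\alpha''-\alpha'))$ by (i), so $C_a^2=qC_a$. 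For (3), if $a,a'\in\mathbb{F}_q$ are distinct, the $(\alpha',\alpha'')$-block of $C_aC_{a'}$ is $\sum_{\gamma}\phi\bigl((a-a')\gamma+(a'\alpha''-a\alpha')\bigr)$, and since $a-a'\neq0$ the summand runs bijectively over $\phi(\mathbb{F}_q)$, giving $J_q$ by (iii), hence $C_aC_{a'}=J_{q^2}$; if one index is $y$, then $C_aC_y=C_a(I_q\otimes J_q)$ has each block equal to $\phi(a(\alpha''-\alpha'))J_q=J_q$ by (ii), so $C_aC_y=J_{q^2}$, and $C_yC_a=J_{q^2}$ by the same computation on the other side (which also yields the commutativity $C_aC_y=C_yC_a$).

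Parts (4) and (5) are analogous block computations: $J_{q^2}C_a$ has every block equal to $\sum_{\gamma}J_q\,\phi(a(\alpha''-\gamma))=\sum_{\gamma}J_q=qJ_q$ by (ii), so $J_{q^2}C_a=qJ_{q^2}$, and symmetrically $C_aJ_{q^2}=qJ_{q^2}$, with $a=y$ following from $J_q^2=qJ_q$; for (5) the block-diagonal factor $I_q\otimes J_q$ replaces the surviving block of $C_a$ by $J_q$ via (ii), leaving $J_{q^2}$ when $a\in\mathbb{F}_q$, and the remaining instance is checked directly from $C_y=I_q\otimes J_q$. I do not expect a genuine obstacle: the only points requiring care are the Kronecker bookkeeping (which tensor slot is the block index) and ensuring the "bijection of $\mathbb{F}_q$" step in (1)--(3) is applied with a nonzero coefficient so that fact (iii) is available. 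The conceptual heart is the cancellation driven by (i): the homomorphism property collapses the convolution-type sum over $\gamma$ either to $q$ copies of one $\phi$-value (giving (2)) or, once a nonzero linear coefficient appears, to the full sum $\sum_{u}\phi(u)=J_q$ (giving (1) and (3)).
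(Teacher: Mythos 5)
Your reduction to the three facts about $\phi$ (homomorphism, permutation matrices, $\sum_{u\in\mathbb{F}_q}\phi(u)=J_q$) is sound, and the block computations for (1)--(4), and for (5) with $a\in\mathbb{F}_q$, are all correct. This is considerably more self-contained than the paper, which simply cites \cite[Lemma~2.8]{KS} for parts (i)--(iii) and disposes of (iv) and (v) with the one-line remark that each $C_a$ is a $q\times q$ block matrix whose blocks are permutation matrices.

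There is, however, one step that fails: your claim that the case $a=y$ of part (5) ``is checked directly from $C_y=I_q\otimes J_q$.'' The direct check gives $(I_q\otimes J_q)C_y=(I_q\otimes J_q)^2=I_q\otimes J_q^2=q\,(I_q\otimes J_q)$, which is block-diagonal and not equal to $J_{q^2}$. So identity (v) is actually false for $a=y$ and holds only for $a\in\mathbb{F}_q$ --- precisely the range covered by the paper's own justification (permutation-matrix blocks), and the only range on which (v) is invoked later (in Proposition~\ref{prop:mf1}(iv) the exceptional symbol is $x$, with $C_x=O_{q^2}$, never $y$). The gap is thus a misstatement inherited from the lemma rather than a flaw in your method: restrict (v) to $a\in\mathbb{F}_q$, or record $(I_q\otimes J_q)C_y=qC_y$ as the correct value for the excluded case. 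Everything else in your argument goes through.
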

\begin{proof}
See \cite[Lemma~2.8]{KS} for the proofs of (i), (ii), (iii). 
(iv) and (v)
 follow from the fact that each $C_a$ is a $q\times q$ block matrix whose blocks are permutation matrices of order $q$.  
\end{proof}

\subsection{Latin squares from finite fields}
Let $\mathbb{F}_{q}$ be the finite field of order $q$. 
Let $H_{q}$ be the multiplicative table of $\mathbb{F}_{q}$.  
From $H_{q}$, we have $q-1$ Latin squares on $\mathbb{F}_q$; for each $\beta\in \mathbb{F}_{q}$, define $L_\beta$ as 
\begin{align*}
L_\beta=(\beta(-\beta'+\beta''))_{\beta',\beta''\in\mathbb{F}_{q}}.
\end{align*}
Note that $L_\beta$ ($\beta\in \mathbb{F}_{q}^*$) form a complete set of mutually suitable Latin squares (MSLS) on $\mathbb{F}_q$; Latin squares $L_1,L_2$ of the same order  are said to be \emph{suitable} if every superimposition of each row of $L_1$ on each row of $L_2$ results in
only one element of the form $(a,a)$, and a set of Latin squares in which every distinct pair of Latin squares
is suitable is called \emph{mutually suitable Latin squares}. 
Note that the existence of mutually suitable Latin squares are equivalent to the existence of mutually orthogonal Latin squares of the same order \cite[Lemma 9]{HKO} and that mutually suitable Latin squares are also called mutually UFS Latin squares in \cite{KS}.

For $\beta\in\mathbb{F}_{q}^*$, define disjoint permutation matrices $P_{\beta,\gamma}$ ($\gamma\in \mathbb{F}_{q}$) by $L_{\beta}=\sum_{\gamma\in \mathbb{F}_q}\gamma P_{\beta,\gamma}$. 
We prepare the following lemma for the permutation matrices $P_{\beta,\gamma}$ ($\beta\in\mathbb{F}_{q}^*,\gamma\in\mathbb{F}_q$). 
%The following is about the product for the permutation matrices. 
We denote the $(a,b)$-entry of a matrix $X$ by $X(a,b)$. 
\begin{lemma}\label{lem:msl}
\begin{enumerate}
\item For $\beta,\beta'\in\mathbb{F}_{q}^*$ and $\gamma\in\mathbb{F}_{q}$, $P_{\beta\beta',\gamma\beta'}=P_{\beta,\gamma}$. 
\item For $\beta,\beta'\in\mathbb{F}_{q}^*$ and $\gamma,\gamma'\in\mathbb{F}_{q}$, $P_{\beta,\gamma}P_{\beta',\gamma'}=P_{\beta \beta',\beta\gamma'+ \beta'\gamma}$. 
\item For $\beta,\beta'\in\mathbb{F}_{q}^*$, 
\begin{align*}
\sum_{\gamma\in\mathbb{F}_{q}}P_{\beta\beta',(\beta+\beta')\gamma}=\begin{cases}qI_{q}&\text{ if } \beta+\beta'=0,\\J_{q}&\text{ if } \beta+\beta'\neq 0.\end{cases}
\end{align*}
\item For $\beta,\beta'\in\mathbb{F}_{q}^*$, 
\begin{align*}
\sum_{\gamma,\gamma'\in\mathbb{F}_{q}^*,\gamma\neq \gamma'}P_{\beta\beta',\beta\gamma'+\beta'\gamma}=\begin{cases}(q-2)(J_{q}-I_{q})&\text{ if } \beta+\beta'=0,\\2I_{q}+(q-3)J_{q}&\text{ if } \beta+\beta'\neq 0.\end{cases}
\end{align*}
\end{enumerate}
\end{lemma}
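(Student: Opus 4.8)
The plan is to work throughout with the explicit formula $L_\beta(\beta',\beta'')=\beta(-\beta'+\beta'')$, from which $P_{\beta,\gamma}$ is the $(0,1)$-matrix with $(\beta',\beta'')$-entry equal to $1$ exactly when $\beta(\beta''-\beta')=\gamma$, i.e.\ when $\beta''=\beta'+\beta^{-1}\gamma$. Thus $P_{\beta,\gamma}$ is the permutation matrix of the translation $\beta'\mapsto\beta'+\beta^{-1}\gamma$ on $\mathbb{F}_q$; in particular $P_{\beta,\gamma}=T_{\beta^{-1}\gamma}$ where $T_c$ denotes the permutation matrix of $x\mapsto x+c$. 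This single observation reduces everything to the elementary identities $T_cT_{c'}=T_{c+c'}$ and $\sum_{c\in\mathbb{F}_q}T_c=J_q$. I would state this reduction as the first step, since all four parts follow from it with only bookkeeping.

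For (i): $P_{\beta\beta',\gamma\beta'}=T_{(\beta\beta')^{-1}\gamma\beta'}=T_{\beta^{-1}\gamma}=P_{\beta,\gamma}$, immediate. For (ii): $P_{\beta,\gamma}P_{\beta',\gamma'}=T_{\beta^{-1}\gamma}T_{\beta'^{-1}\gamma'}=T_{\beta^{-1}\gamma+\beta'^{-1}\gamma'}$, and one checks $\beta^{-1}\gamma+\beta'^{-1}\gamma'=(\beta\beta')^{-1}(\beta'\gamma+\beta\gamma')=(\beta\beta')^{-1}(\beta\gamma'+\beta'\gamma)$, which is exactly $P_{\beta\beta',\beta\gamma'+\beta'\gamma}$. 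For (iii): by (ii) (with the roles arranged so the second subscript is $(\beta+\beta')\gamma$) the summand is $T_{(\beta\beta')^{-1}(\beta+\beta')\gamma}$; if $\beta+\beta'=0$ every summand is $T_0=I_q$ and there are $q$ of them, giving $qI_q$, while if $\beta+\beta'\neq0$ the map $\gamma\mapsto(\beta\beta')^{-1}(\beta+\beta')\gamma$ is a bijection of $\mathbb{F}_q$, so the sum runs over all $T_c$ once each and equals $J_q$. For (iv): split the full double sum $\sum_{\gamma,\gamma'\in\mathbb{F}_q}P_{\beta\beta',\beta\gamma'+\beta'\gamma}$ into the part already computed in (iii) (the diagonal $\gamma=\gamma'$, after noting $\beta\gamma+\beta'\gamma=(\beta+\beta')\gamma$), the terms with $\gamma=0$ or $\gamma'=0$, and the desired off-diagonal nonzero part; then use that $\sum_{\gamma,\gamma'\in\mathbb{F}_q}P_{\beta\beta',\beta\gamma'+\beta'\gamma}=qJ_q$ (for fixed $\gamma$, as $\gamma'$ ranges over $\mathbb{F}_q$ the element $\beta\gamma'+\beta'\gamma$ ranges over all of $\mathbb{F}_q$, contributing $J_q$, and there are $q$ choices of $\gamma$) and that the $\gamma=0$ slice is $\sum_{\gamma'}T_{(\beta\beta')^{-1}\beta\gamma'}=J_q$, similarly the $\gamma'=0$ slice is $J_q$, with the overlap term $\gamma=\gamma'=0$ equal to $I_q$. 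Subtracting gives $qJ_q-(\text{diagonal})-J_q-J_q+I_q$, and plugging in the two cases of (iii) for the diagonal yields $(q-2)(J_q-I_q)$ when $\beta+\beta'=0$ and $2I_q+(q-3)J_q$ when $\beta+\beta'\neq0$.

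There is no real obstacle here; the only thing requiring care is the inclusion–exclusion in part (iv), specifically getting the diagonal/axes overlaps right and making sure the index set $\{\gamma,\gamma'\in\mathbb{F}_q^*,\ \gamma\neq\gamma'\}$ is exactly the complement of the diagonal union the two coordinate axes. I would present (i)–(iii) tersely and devote the bulk of the written proof to laying out the four-way decomposition in (iv) cleanly.
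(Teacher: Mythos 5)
Your reduction $P_{\beta,\gamma}=T_{\beta^{-1}\gamma}$, where $T_c$ is the permutation matrix of the translation $x\mapsto x+c$, is correct and handles (i)--(iii) cleanly; it is essentially a repackaging of the paper's entrywise verification (the paper checks directly that the $(a,b)$-entry of $P_{\beta,\gamma}$ is $1$ iff $\beta(-a+b)=\gamma$ and composes permutations by hand), and your version of (iii) via the bijection $\gamma\mapsto(\beta\beta')^{-1}(\beta+\beta')\gamma$ matches the paper's use of (i). The overall strategy for (iv) --- subtract the diagonal and the two coordinate axes from the full double sum $qJ_q$ --- is also the paper's strategy.

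However, your inclusion--exclusion in (iv) is off by $I_q$, and it is exactly at the spot you flagged as needing care. The origin $(\gamma,\gamma')=(0,0)$ lies in \emph{all three} excluded sets: the diagonal, the slice $\{\gamma=0\}$, and the slice $\{\gamma'=0\}$. Subtracting the full diagonal sum, the full $\gamma=0$ slice ($J_q$), and the full $\gamma'=0$ slice ($J_q$) removes the origin's contribution $P_{\beta\beta',0}=I_q$ three times, so you must add it back \emph{twice}, not once. Your displayed formula $qJ_q-(\text{diagonal})-J_q-J_q+I_q$ therefore evaluates to $(q-2)J_q-(q-1)I_q$ in the case $\beta+\beta'=0$ and to $(q-3)J_q+I_q$ in the case $\beta+\beta'\neq 0$, neither of which is the claimed answer; the correct formula is $qJ_q-(\text{diagonal})-2J_q+2I_q$, which does yield $(q-2)(J_q-I_q)$ and $2I_q+(q-3)J_q$ respectively. (Sanity check on index counts: $q^2-q-2q+2=(q-1)(q-2)$ is the size of $\{(\gamma,\gamma'):\gamma,\gamma'\in\mathbb{F}_q^*,\ \gamma\neq\gamma'\}$, whereas your accounting gives $q^2-3q+1$.) The paper sidesteps this by subtracting the two \emph{punctured} axes $\sum_{\gamma\in\mathbb{F}_q^*}$, i.e.\ $2(J_q-I_q)$, together with the full diagonal, which is a genuine partition of the excluded set and requires no overlap correction; adopting that decomposition, or fixing the $+2I_q$, repairs your argument.
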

\begin{proof}
Let $\beta,\beta'\in\mathbb{F}_{q}^*$ and $\gamma,\gamma'\in\mathbb{F}_{q}$. 
By the definition of $P_{\beta,\gamma}$, the $(a,b)$-entry of $P_{\beta,\gamma}$ equals to $1$ if and only if $\beta(-a+b)=\gamma$ for $a,b\in\mathbb{F}_{q}$. 
Thus (i) follows. 
For (ii), $P_{\beta,\gamma}P_{\beta',\gamma'}$ is a permutation matrix and 
\begin{align*}
(P_{\beta,\gamma}P_{\beta',\gamma'})(a,b)=1 &\Leftrightarrow \exists c\in\mathbb{F}_{q} \text{ such that } (P_{\beta,\gamma})(a,c)=(P_{\beta',\gamma'})(c,b)=1\\
&\Leftrightarrow \exists c\in\mathbb{F}_{q} \text{ such that } \beta(-a+c)=\gamma, \beta'(-c+b)=\gamma'\\
&\Leftrightarrow \exists c\in\mathbb{F}_{q} \text{ such that } c=a+\frac{\gamma}{\beta}=b-\frac{\gamma'}{\beta'}\\
&\Leftrightarrow \beta\beta'(-a+b)=\beta\gamma'+\beta'\gamma \\
&\Leftrightarrow P_{\beta \beta',\beta\gamma'+ \beta'\gamma}(a,b)=1. 
\end{align*}
Therefore (ii) holds. 

For (iii), if $\beta+\beta'=0$, then by $P_{\beta'',0}=I_{q}$ for any $\beta''\in \mathbb{F}_{q}^*$, 
\begin{align*}
\sum_{\gamma\in\mathbb{F}_{q}}P_{\beta\beta',(\beta+\beta')\gamma}&=\sum_{\gamma\in\mathbb{F}_{q}}P_{\beta\beta',0}=\sum_{\gamma\in\mathbb{F}_{q}}I_{q}=qI_{q}.
\end{align*}
If $\beta+\beta'\neq 0$, then by (i) and by the fact that $\sum_{\gamma\in\mathbb{F}_{q}}P_{\beta'',\gamma}=J_{q}$ for any $\beta''\in\mathbb{F}_{q}^*$, 
\begin{align*}
\sum_{\gamma\in\mathbb{F}_{q}}P_{\beta\beta',(\beta+\beta')\gamma}&=\sum_{\gamma\in\mathbb{F}_{q}}P_{\frac{\beta\beta'}{\beta+\beta'},\gamma}=J_{q}.
\end{align*}

For (iv), 
\begin{align*}
&\sum_{\gamma,\gamma'\in\mathbb{F}_{q}^*,\gamma\neq \gamma'}P_{\beta\beta',\beta\gamma'+\beta'\gamma}\\
&=\left(\sum_{\gamma\in\mathbb{F}_{q}}P_{\beta,\gamma}\right)\left(\sum_{\gamma'\in\mathbb{F}_{q}}P_{\beta',\gamma'}\right)-\sum_{\gamma\in\mathbb{F}_{q}}P_{\beta\beta',(\beta+\beta')\gamma}
 -\sum_{\gamma\in\mathbb{F}_{q}^*}P_{\beta\beta',\beta'\gamma}-\sum_{\gamma'\in\mathbb{F}_{q}^*}P_{\beta\beta',\beta\gamma'}\\
&=\left(\sum_{\gamma\in\mathbb{F}_{q}}P_{\beta,\gamma}\right)\left(\sum_{\gamma'\in\mathbb{F}_{q}}P_{\beta',\gamma'}\right)-\sum_{\gamma\in\mathbb{F}_{q}}P_{\beta\beta',(\beta+\beta')\gamma}
 -\sum_{\gamma\in\mathbb{F}_{q}^*}P_{\beta,\gamma}-\sum_{\gamma'\in\mathbb{F}_{q}^*}P_{\beta',\gamma'} \text{ (by (i))}\\
&=qJ_{q}-\sum_{\gamma\in\mathbb{F}_{q}}P_{\beta\beta',(\beta+\beta')\gamma}
 -2(J_{q}-I_{q}) \text{ (by $\sum_{\gamma\in\mathbb{F}_{q}}P_{\beta,\gamma}=J_{q},P_{\beta,0}=I_{q}$)}\\
&=\begin{cases}(q-2)(J_{q}-I_{q})&\text{ if } \beta+\beta'=0,\\2I_{q}+(q-3)J_{q}&\text{ if } \beta+\beta'\neq 0.\end{cases} \text{ (by (ii))}\qedhere
\end{align*}
\end{proof}

%%%%%%%%%%%%%%%%%%%%%%%%%%%%%%%%%%%%%%%%%%%%%%%%%%%%%%%%%%%%%%%%%%%%%%%%%%%%
\section{Association schemes obtained from twin prime powers}\label{sec:astp}
In this section we use twin prime powers $q$ and $q+2$ to construct a set of symmetric designs and derive a commutative association scheme from the symmetric designs. 
\subsection{Symmetric designs}
Let $q,q+2$ be twin prime powers. 
Fix a bijection $\varphi:\mathbb{F}_{q+2}\rightarrow \mathbb{F}_q\cup\{x,y\}$ such that $\varphi(0)=x$. 
Consider a Latin square obtained from $L_\beta$ by replacing entries with the image of $\varphi$, which we denote by $L_{\varphi(\beta)}$. 
%From now, we regard $L_\beta$ ($\beta\in\mathbb{F}_{q+2}$) as Latin squares on $\mathbb{F}_q\cup\{x,y\}$ by replacing $\beta$ with $f(\beta)$ in the entries. 
Recall that we denote the $(u,v)$-entry of an array $L$ by $L(u,v)$. 
Then for $\beta,\beta',\beta''\in\mathbb{F}_{q+2}$, $L_{\varphi(\beta)}(\beta',\beta'')=\varphi(L_{\beta}(\beta',\beta''))$. 
%Then Latin squares $L_{\varphi(\beta)}$ $(\beta\in\mathbb{F}_{q+2}^*)$ have diagonal entries $x$.  

We now construct symmetric designs from auxiliary matrices for $\mathbb{F}_q$ and mutually suitable Latin squares for $\mathbb{F}_{q+2}$. 
For $\beta\in\mathbb{F}_{q+2}^*$, we define a $(q+2)q^2\times (q+2)q^2$ $(0,1)$-matrix $N_\beta$ to be a $(q+2)\times (q+2)$ block matrix with rows and columns indexed by $\mathbb{F}_{q+2}$ whose $(\beta',\beta'')$-block matrix is $C_{L_{\varphi(\beta)}(\beta',\beta'')}$;  
\begin{align*}
N_\beta=(C_{L_{\varphi(\beta)}(\beta',\beta'')})_{\beta',\beta''\in\mathbb{F}_{q+2}}=\sum_{\gamma\in\mathbb{F}_{q+2}}P_{\beta,\gamma}\otimes C_{\varphi(\gamma)}. 
\end{align*}

\begin{proposition}\label{prop:1}
\begin{enumerate}
\item For any $\beta\in\mathbb{F}_{q+2}^*$, $L_\beta^\top=L_{-\beta}$.   
\item For any $\beta\in\mathbb{F}_{q+2}^*$, $N_{\beta}N_{-\beta}=q^2 I_{(q+2)q^2}+q J_{(q+2)q^2}$. 
\item For any $\beta,\beta'\in\mathbb{F}_{q+2}^*$ such that $\beta+\beta'\neq 0$, $N_{\beta} N_{\beta'}= q N_{\frac{\beta\beta'}{\beta+\beta'}}+2 I_{q+2}\otimes J_{q^2}+(q-1)J_{(q+2)q^2}$. 
\item For any $\beta\in\mathbb{F}_{q+2}^*$, $N_\beta(I_{q+2}\otimes J_{q^2})=(I_{q+2}\otimes J_{q^2})N_\beta=q(J_{(q+2)q^2}-I_{q+2}\otimes J_{q^2})$. 
\item $\sum_{\beta\in\mathbb{F}_{q+2}^*}N_\beta=(J_{q+2}-I_{q+2})\otimes (q I_{q^2}+J_{q^2})$.
\end{enumerate}
\end{proposition}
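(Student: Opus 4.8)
The plan is to work throughout with the expansion $N_\beta=\sum_{\gamma\in\mathbb{F}_{q+2}}P_{\beta,\gamma}\otimes C_{\varphi(\gamma)}$, so that every product $N_\beta N_{\beta'}$ turns into a double sum $\sum_{\gamma,\gamma'}(P_{\beta,\gamma}P_{\beta',\gamma'})\otimes(C_{\varphi(\gamma)}C_{\varphi(\gamma')})$; the Kronecker structure lets us treat the Latin-square factors via Lemma~\ref{lem:msl} and the auxiliary-matrix factors via Lemma~\ref{lem:mfc} independently. Part (i) is immediate from $L_\beta(\beta',\beta'')=\beta(-\beta'+\beta'')$: transposing swaps $\beta',\beta''$ and negates the value, so $L_\beta^\top=L_{-\beta}$.

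For (ii) and (iii) I would split the double sum into the diagonal part $\gamma=\gamma'$ and the off-diagonal part $\gamma\neq\gamma'$. On the diagonal, $P_{\beta,\gamma}P_{\beta',\gamma}=P_{\beta\beta',(\beta+\beta')\gamma}$ by Lemma~\ref{lem:msl}(ii) and $C_{\varphi(\gamma)}^2=qC_{\varphi(\gamma)}$ by Lemma~\ref{lem:mfc}(ii) (valid also for $\gamma=0$, where $C_x=O$). In case (iii), $\beta+\beta'\neq0$, so Lemma~\ref{lem:msl}(i) applied to the factorization $\beta\beta'=\tfrac{\beta\beta'}{\beta+\beta'}\cdot(\beta+\beta')$ rewrites $P_{\beta\beta',(\beta+\beta')\gamma}$ as $P_{\frac{\beta\beta'}{\beta+\beta'},\gamma}$, and the diagonal part collapses to $qN_{\frac{\beta\beta'}{\beta+\beta'}}$; in case (ii), $\beta'=-\beta$ makes $(\beta+\beta')\gamma=0$ and $P_{-\beta^2,0}=I_{q+2}$, so the diagonal part is $qI_{q+2}\otimes\sum_{a\in\mathbb{F}_q\cup\{y\}}C_a=q^2I_{(q+2)q^2}+qI_{q+2}\otimes J_{q^2}$ by Lemma~\ref{lem:mfc}(i). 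Off the diagonal, the crucial observation is that $\varphi(0)=x$ and $C_x=O$, so every term with $\gamma=0$ or $\gamma'=0$ vanishes; for $\gamma\neq\gamma'$ both nonzero, $\varphi(\gamma)\neq\varphi(\gamma')$ lie in $\mathbb{F}_q\cup\{y\}$, whence $C_{\varphi(\gamma)}C_{\varphi(\gamma')}=J_{q^2}$ by Lemma~\ref{lem:mfc}(iii). What remains is $\big(\sum_{\gamma,\gamma'\in\mathbb{F}_{q+2}^*,\,\gamma\neq\gamma'}P_{\beta\beta',\beta\gamma'+\beta'\gamma}\big)\otimes J_{q^2}$, which is exactly the sum evaluated in Lemma~\ref{lem:msl}(iv) over $\mathbb{F}_{q+2}$: it equals $q(J_{q+2}-I_{q+2})\otimes J_{q^2}$ when $\beta+\beta'=0$ (case (ii)) and $(2I_{q+2}+(q-1)J_{q+2})\otimes J_{q^2}$ when $\beta+\beta'\neq0$ (case (iii)). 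Adding the diagonal and off-diagonal parts and using $J_{q+2}\otimes J_{q^2}=J_{(q+2)q^2}$ gives the two claimed identities.

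Parts (iv) and (v) are direct computations from the same expansion. For (iv), $(P_{\beta,\gamma}\otimes C_{\varphi(\gamma)})(I_{q+2}\otimes J_{q^2})=P_{\beta,\gamma}\otimes(C_{\varphi(\gamma)}J_{q^2})$, which is $qP_{\beta,\gamma}\otimes J_{q^2}$ for $\gamma\neq0$ by Lemma~\ref{lem:mfc}(iv) and $O$ for $\gamma=0$; summing over $\gamma\in\mathbb{F}_{q+2}^*$ and using $\sum_{\gamma\in\mathbb{F}_{q+2}^*}P_{\beta,\gamma}=J_{q+2}-I_{q+2}$ (since $P_{\beta,0}=I_{q+2}$) yields $q(J_{q+2}-I_{q+2})\otimes J_{q^2}=q(J_{(q+2)q^2}-I_{q+2}\otimes J_{q^2})$, and the left-multiplication case is handled identically with $J_{q^2}C_a$. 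For (v), interchange the order of summation to get $\sum_{\gamma\in\mathbb{F}_{q+2}}\big(\sum_{\beta\in\mathbb{F}_{q+2}^*}P_{\beta,\gamma}\big)\otimes C_{\varphi(\gamma)}$; here $\sum_{\beta\in\mathbb{F}_{q+2}^*}P_{\beta,\gamma}$ equals $(q+1)I_{q+2}$ for $\gamma=0$ and $J_{q+2}-I_{q+2}$ for $\gamma\neq0$ (for $a\neq b$ there is a unique nonzero $\beta$ with $\beta(b-a)=\gamma$). Since $C_{\varphi(0)}=C_x=O$, the $\gamma=0$ term drops, and $\sum_{\gamma\in\mathbb{F}_{q+2}^*}C_{\varphi(\gamma)}=\sum_{a\in\mathbb{F}_q\cup\{y\}}C_a=qI_{q^2}+J_{q^2}$ by Lemma~\ref{lem:mfc}(i), giving $(J_{q+2}-I_{q+2})\otimes(qI_{q^2}+J_{q^2})$.

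The only genuine subtlety, and the step most prone to slips, is the consistent bookkeeping of the exceptional column $\gamma=0$: the identity $\varphi(0)=x$ together with $C_x=O$ is precisely what kills the off-diagonal terms with a zero index, restricts the surviving sums to $\mathbb{F}_{q+2}^*$, and ultimately produces the $2I_{q+2}\otimes J_{q^2}$ correction in (iii). One must also remember that Lemma~\ref{lem:msl} is being invoked over the field $\mathbb{F}_{q+2}$, so the parameter "$q$" occurring in its statements is our "$q+2$"; beyond that, everything reduces to routine Kronecker-product algebra.
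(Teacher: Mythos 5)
Your proof is correct and follows essentially the same route as the paper: expand $N_\beta=\sum_\gamma P_{\beta,\gamma}\otimes C_{\varphi(\gamma)}$, split the product into the diagonal ($\gamma=\gamma'$) and off-diagonal parts, and evaluate them with Lemma~\ref{lem:msl} (applied over $\mathbb{F}_{q+2}$) and Lemma~\ref{lem:mfc}, using $C_{\varphi(0)}=C_x=O$ to discard the $\gamma=0$ terms. Your handling of the $q\mapsto q+2$ substitution in Lemma~\ref{lem:msl}(iv) and of $\sum_{\beta}P_{\beta,\gamma}$ in (v) is, if anything, slightly more explicit than the paper's.
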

\begin{proof}
(i) is easy to see, and (iv)  follow from Lemma~\ref{lem:mfc}(iv). 
(ii) is done \cite{W}. 
We prove (ii) as well as (iii) in a same manner.  
For $\beta,\beta'\in\mathbb{F}_{q+2}^*$, by Lemma~\ref{lem:mfc} (ii), (iii), 
\begin{align}
N_\beta N_{\beta'}&=\sum_{\gamma,\gamma'\in\mathbb{F}_{q+2}}P_{\beta\beta',\beta\gamma'+\beta'\gamma}\otimes C_{\varphi(\gamma)}C_{\varphi(\gamma')}\nonumber\\
&=\sum_{\gamma\in\mathbb{F}_{q+2}}P_{\beta\beta',(\beta+\beta')\gamma}\otimes (C_{\varphi(\gamma)})^2+\sum_{\gamma,\gamma'\in\mathbb{F}_{q+2},\gamma\neq \gamma'}P_{\beta\beta',\beta\gamma'+\beta'\gamma}\otimes C_{\varphi(\gamma)}C_{\varphi(\gamma')}\nonumber\\
&=q\sum_{\gamma\in\mathbb{F}_{q+2}}P_{\beta\beta',(\beta+\beta')\gamma}\otimes C_{\varphi(\gamma)}+\sum_{\gamma,\gamma'\in\mathbb{F}_{q+2}^*,\gamma\neq \gamma'}P_{\beta\beta',\beta\gamma'+\beta'\gamma}\otimes J_{q^2}. \label{eq:nn} 
\end{align}
\eqref{eq:nn} is calculated depending on whether $\beta+\beta'=0$ or not as follows.
If $\beta+\beta'=0$, then 
\begin{align*}
\eqref{eq:nn}&=q\sum_{\gamma\in\mathbb{F}_{q+2}}P_{\beta\beta',0}\otimes C_{\varphi(\gamma)}+\sum_{\gamma,\gamma'\in\mathbb{F}_{q+2}^*,\gamma\neq \gamma'}P_{\beta\beta',\beta\gamma'+\beta'\gamma}\otimes J_{q^2}\\ 
&=qI_{q+2}\otimes \sum_{\gamma\in\mathbb{F}_{q+2}}C_{\varphi(\gamma)}+q(J_{q+2}-I_{q+2})\otimes J_{q^2}\text{ (by Lemma~\ref{lem:msl} (iv))}\\
&=qI_{q+2}\otimes (qI_{q^2}+J_{q^2})+q(J_{q+2}-I_{q+2})\otimes J_{q^2}\text{ (by Lemma~\ref{lem:mfc} (i))}\\
&=q^2I_{(q+2)q^2}+qJ_{(q+2)q^2}, 
\end{align*} 
which proves (ii). 
If $\beta+\beta'\neq0$, then 
\begin{align*}
\eqref{eq:nn}&=q\sum_{\gamma\in\mathbb{F}_{q+2}}P_{\frac{\beta\beta'}{\beta+\beta'},\gamma}\otimes C_{\varphi(\gamma)}+\sum_{\gamma,\gamma'\in\mathbb{F}_{q+2}^*,\gamma\neq \gamma'}P_{\beta\beta',\beta\gamma'+\beta'\gamma}\otimes J_{q^2}\text{ (by Lemma~\ref{lem:msl} (i))}\\ 
%&=q N_{\frac{\beta\beta'}{\beta+\beta'}}+\sum_{\gamma,\gamma'\in\mathbb{F}_{q+2}^*,\gamma\neq \gamma'}P_{\beta\beta,\beta\gamma'+\beta'\gamma}\otimes J_{q^2}\\
&=q N_{\frac{\beta\beta'}{\beta+\beta'}}+(2I_{q+2}+(q-1)J_{q+2})\otimes J_{q^2}\text{ (by Lemma~\ref{lem:msl} (iv))}\\
&=q N_{\frac{\beta\beta'}{\beta+\beta'}}+2I_{q+2}\otimes J_{q^2}+(q-1)J_{(q+2)q^2},
\end{align*} 
which proves (iii). 

For (v),  
\begin{align*}
\sum_{\beta\in\mathbb{F}_{q+2}^*}N_\beta&=\sum_{\gamma\in\mathbb{F}_{q+2}^*}(\sum_{\beta\in\mathbb{F}_{q+2}^*}P_{\beta,\gamma})\otimes C_{\varphi(\gamma)}\\
&=\sum_{\gamma\in\mathbb{F}_{q+2}^*}(J_{q+2}-I_{q+2})\otimes C_{\varphi(\gamma)}\text{ (by $\sum_{\gamma\in\mathbb{F}_{q+2}^*}P_{\beta,\gamma}=J_{q+2}-I_{q+2}$)}\\
&=(J_{q+2}-I_{q+2})\otimes \sum_{\gamma\in\mathbb{F}_{q+2}^*}C_{\varphi(\gamma)}\\
&=(J_{q+2}-I_{q+2})\otimes (q I_{q^2}+J_{q^2}). \text{ (by Lemma~\ref{lem:mfc}(i))} \qedhere
\end{align*} 

\end{proof}
Note that by Proposition~\ref{prop:1} (i), (ii) and (iii), the incidence matrices $N_\beta$ ($\beta\in\mathbb{F}_{q+2}^*$) are commuting symmetric $(q^2(q+2),q(q+1),q)$-designs.  

\subsection{An association scheme with $q+3$ classes and  its eigenmatrices}
We define the adjacency matrices as 
\begin{align*}
A_0&=I_{(q+2)q^2},\\
A_1&=(J_{q+2}-I_{q+2})\otimes I_{q^2},\\
A_2&=I_{q+2}\otimes(J_{q^2}-I_{q^2}), \\
A_{3,\beta}&=N_\beta-A_1 \quad (\beta\in\mathbb{F}_{q+2}^*).
\end{align*}

Using Proposition~\ref{prop:1}, we obtain the following theorem. 
\begin{theorem}\label{thm:as}
The matrices $A_0,A_1,A_2,A_{3,\beta}$ {\rm(}$\beta\in\mathbb{F}_{q+2}^*${\rm)} form a commutative association scheme with $q+3$ classes. 
\end{theorem}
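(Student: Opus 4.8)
The plan is to verify the five association scheme axioms (AS1)--(AS5) directly for the list $A_0, A_1, A_2, A_{3,\beta}$ ($\beta\in\mathbb{F}_{q+2}^*$), using Proposition~\ref{prop:1} together with Lemma~\ref{lem:mfc} as the computational engine. Axioms (AS1) and (AS3) are immediate: $A_0 = I_{(q+2)q^2}$ by definition, and each of $A_1, A_2, A_{3,\beta}$ is symmetric (the symmetry of $A_{3,\beta} = N_\beta - A_1$ follows from Proposition~\ref{prop:1}(i)--(ii), which makes $N_\beta$ symmetric), so the scheme is in fact symmetric. For (AS2) I would compute $A_0 + A_1 + A_2 + \sum_{\beta\in\mathbb{F}_{q+2}^*} A_{3,\beta}$; here $\sum_\beta A_{3,\beta} = \sum_\beta N_\beta - (q+1)A_1 = (J_{q+2}-I_{q+2})\otimes(qI_{q^2}+J_{q^2}) - (q+1)(J_{q+2}-I_{q+2})\otimes I_{q^2}$ by Proposition~\ref{prop:1}(v), which simplifies to $(J_{q+2}-I_{q+2})\otimes J_{q^2}$, and adding $A_0 = I_{q+2}\otimes I_{q^2}$, $A_1 = (J_{q+2}-I_{q+2})\otimes I_{q^2}$, $A_2 = I_{q+2}\otimes(J_{q^2}-I_{q^2})$ telescopes to $J_{(q+2)q^2}$.

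The substance of the proof is (AS4): I must show every product of two generators lies in the span. The products among $A_0, A_1, A_2$ are routine Kronecker-product computations ($A_1^2 = (q+1)A_1 + (q+1)A_0 + \text{(lower, via } J_{q+2}^2\text{)}$, etc., expressed back in terms of $A_0, A_1, A_2, \sum_\beta A_{3,\beta}$), and $A_1 A_{3,\beta}$, $A_2 A_{3,\beta}$ reduce via Lemma~\ref{lem:mfc}(v) and Proposition~\ref{prop:1}(iv): note $A_2 N_\beta = (I_{q+2}\otimes J_{q^2})N_\beta - N_\beta = q(J-I_{q+2}\otimes J_{q^2}) - N_\beta$, all of which is in the span. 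The genuinely new case is $A_{3,\beta}A_{3,\beta'}$. Expanding, $A_{3,\beta}A_{3,\beta'} = N_\beta N_{\beta'} - N_\beta A_1 - A_1 N_{\beta'} + A_1^2$. When $\beta + \beta' = 0$, Proposition~\ref{prop:1}(ii) gives $N_\beta N_{\beta'} = q^2 I + qJ$; when $\beta+\beta'\neq 0$, Proposition~\ref{prop:1}(iii) gives $N_\beta N_{\beta'} = qN_{\beta\beta'/(\beta+\beta')} + 2I_{q+2}\otimes J_{q^2} + (q-1)J = qA_{3,\beta\beta'/(\beta+\beta')} + qA_1 + 2I_{q+2}\otimes J_{q^2} + (q-1)J$. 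Since $I_{q+2}\otimes J_{q^2} = A_0 + A_2$ and $J = A_0+A_1+A_2+\sum_\beta A_{3,\beta}$, every term rewrites as an explicit nonnegative integer combination of the generators; the key structural point making this work is that $\beta\mapsto \beta\beta'/(\beta+\beta')$ stays inside $\mathbb{F}_{q+2}^*$, so no index escapes the list.

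Axiom (AS5), commutativity, is then essentially free: $A_0, A_1, A_2$ mutually commute (tensor products of commuting matrices built from $I$ and $J$), $A_1$ and $A_2$ commute with every $N_\beta$ by Proposition~\ref{prop:1}(iv) and a direct check, and $N_\beta N_{\beta'} = N_{\beta'}N_\beta$ because the right-hand sides in Proposition~\ref{prop:1}(ii)--(iii) are symmetric in $\beta,\beta'$ (using $\beta\beta'/(\beta+\beta') = \beta'\beta/(\beta'+\beta)$); hence all $A_{3,\beta}$ commute with one another and with $A_1, A_2$. I expect the main obstacle to be purely bookkeeping: carrying out the (AS4) expansions cleanly and checking that in the $\beta+\beta'=0$ subcase of $A_{3,\beta}A_{3,\beta'}$ the answer, $q^2 A_0 + qJ - qA_1 - qA_1 + A_1^2$ reorganized, still lands in the span with the correct coefficients — there is no conceptual difficulty, but the constants must be tracked carefully, and one should double-check that the $q+3$ matrices are genuinely distinct and nonzero (which requires $q\geq 2$, automatic for prime powers) so that we really have an association scheme with exactly $q+3$ classes.
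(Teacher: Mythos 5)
Your overall strategy---checking (AS1)--(AS5) directly from Proposition~\ref{prop:1}---is exactly the paper's, but one step is genuinely wrong. You claim that each $A_{3,\beta}$ is symmetric (and hence that the whole scheme is symmetric), deducing this from Proposition~\ref{prop:1}(i)--(ii). What Proposition~\ref{prop:1}(i) actually gives is $L_\beta^\top=L_{-\beta}$, hence $N_\beta^\top=N_{-\beta}$ and $A_{3,\beta}^\top=A_{3,-\beta}$; this equals $A_{3,\beta}$ only when $-\beta=\beta$, i.e.\ only when $\mathbb{F}_{q+2}$ has characteristic $2$ (the paper remarks immediately after the theorem that the scheme is symmetric iff $q=2$). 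Axiom (AS3) is still satisfied, because $A_{3,-\beta}$ belongs to the list of adjacency matrices, so the theorem survives---but your justification must be rewritten in this weaker form and the assertion ``the scheme is in fact symmetric'' deleted.

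Two smaller gaps. First, you never verify that $A_{3,\beta}=N_\beta-A_1$ is a $(0,1)$-matrix; this requires $N_\beta\circ A_1=A_1$ entrywise, which holds because every off-diagonal block of $N_\beta$ is some $C_a$ with $a\in\mathbb{F}_q\cup\{y\}$ and every such $C_a$ has all-ones diagonal ($\phi(0)=I_q$ on the diagonal blocks of $C_\alpha$, and $C_y=I_q\otimes J_q$). The paper opens its proof with exactly this observation. Second, your simplification in (AS2) is off: $(J_{q+2}-I_{q+2})\otimes(qI_{q^2}+J_{q^2})-(q+1)(J_{q+2}-I_{q+2})\otimes I_{q^2}$ equals $(J_{q+2}-I_{q+2})\otimes(J_{q^2}-I_{q^2})$, not $(J_{q+2}-I_{q+2})\otimes J_{q^2}$; with your value the four pieces do not sum to $J_{(q+2)q^2}$, whereas with the corrected one the telescoping works. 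The (AS4)/(AS5) discussion---in particular the observations that $\beta\beta'/(\beta+\beta')$ stays in $\mathbb{F}_{q+2}^*$, that $I_{q+2}\otimes J_{q^2}=A_0+A_2$, and that the right-hand sides of Proposition~\ref{prop:1}(ii),(iii) are symmetric in $\beta,\beta'$---is sound and matches the paper's (terser) argument.
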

\begin{proof}
Since $N_\beta\circ A_1=A_1$ for any $\beta\in\mathbb{F}_{q+2}^*$ where $\circ$ is the entrywise product, the adjacency matrices $A_0,A_1,A_2,A_{3,\beta}$ ($\beta\in\mathbb{F}_{q+2}^*$) are $(0,1)$-matrices. 
The condition (AS1) is obvious. 
The condition (AS2) follows from the completeness of the MSLSs, and the condition (AS3) follows from Proposition~\ref{prop:1} (i). 
Finally Proposition~\ref{prop:1} (ii), (iii), (iv) result in the conditions (AS4) and (AS5). 
\end{proof}
Note that the association scheme in Theorem~\ref{thm:as} is symmetric if and only if $q+2$ is even, namely for $q=2$. 

We further investigate the association scheme. 
Let $X=\mathbb{F}_{q+2}\times \mathbb{F}_q\times \mathbb{F}_q$. 
The binary relations on $X$ with adjacency matrices being $A_0,A_1,A_2,A_{3,\beta}$ ($\beta\in\mathbb{F}_{q+2}^*$) are given as follows:
\begin{align*}
R_0&=\{(x,x)\mid x\in X\},\\
R_1&=\{((b,a_1,a_2),(b',a_1',a_2'))\in X\times X \mid b\neq b',a_1=a_1',a_2=a_2'\},\\
R_2&=\{((b,a_1,a_2),(b',a_1',a_2'))\in X\times X \mid b= b',(a_1,a_2)\neq(a_1',a_2')\}, \\
%R_{3,\beta}&=\left\{ \begin{array}{l}((\beta_1,\alpha_1,\alpha_2),(\beta_1',\alpha_{1}',\alpha_{2}'))\\\quad \in X\times X\end{array} \left|
%\begin{array}{l}
%\beta_1 \neq \beta_1'\text{ and }\\
%((\alpha_1\neq\alpha_1',\frac{\alpha_2-\alpha_2'}{\alpha_1-\alpha_1'}=\varphi(\beta(-\beta_1+\beta_1')) \text{ if }\varphi(\beta(-\beta_1+\beta_1'))\neq y)\\ \text{ or }\\
%(\alpha_1=\alpha_1',\alpha_2\neq\alpha_2' \text{ if }\varphi(\beta(-\beta_1+\beta_1'))= y))
%\end{array}
%\right.\right\},\\
R_{3,\beta}&=\{((\beta_1,\alpha_1,\alpha_2),(\beta_1',\alpha_{1}',\alpha_{2}') \in X\times X \mid
\beta_1 \neq \beta_1',\alpha_1\neq\alpha_1',\frac{\alpha_2-\alpha_2'}{\alpha_1-\alpha_1'}=\varphi(\beta(-\beta_1+\beta_1'))\neq y\}\\
&\quad \cup \{((\beta_1,\alpha_1,\alpha_2),(\beta_1',\alpha_{1}',\alpha_{2}') \in X\times X \mid
\alpha_1=\alpha_1',\alpha_2\neq\alpha_2', \varphi(\beta(-\beta_1+\beta_1'))= y
\}
\end{align*}
where $\beta\in\mathbb{F}_{q+2}^*$. 
It is clear that the binary relations are closed under the addition, %have the property that $(x,y)\in R_i$ implies that $(x+z,y+z)\in R_i$ for any $z\in X$, 
that is, we have the following. 
\begin{theorem}
The association scheme is a translation scheme. 
\end{theorem}
The dual association scheme is $(X^*,\{S_0,S_1,S_2,S_{3,\beta}\mid\beta\in \mathbb{F}_{q+2}^*\})$ defined as follows: $X^*$ is the dual group of $\mathbb{F}_{q+2}\times \mathbb{F}_q\times \mathbb{F}_q$. 
Let $\chi_q$ and $\chi_{q+2}$ be the canonical additive characters of $\mathbb{F}_q$ and $\mathbb{F}_{q+2}$ respectively. 
For $\alpha_1,\alpha_2\in \mathbb{F}_q$ and $\beta\in\mathbb{F}_{q+2}$, we define a character $\chi_{\beta,\alpha_1,\alpha_2}$ of $\mathbb{F}_{q+2}\times \mathbb{F}_q\times \mathbb{F}_q$ by $\chi_{\beta',\alpha_1',\alpha_2'}(\beta,\alpha_1,\alpha_2)=\chi_{q+2}(\beta' \beta)\chi_{q}(\alpha_1'\alpha_1+\alpha_2'\alpha_2)$. 
Then 
\begin{align*}
S_0&=\{(\chi,\chi)\mid \chi\in X^*\}, \\
S_1&=\{(\chi_{\beta,\alpha_1,\alpha_2},\chi_{\beta',\alpha_1',\alpha_2'})\in X^*\times X^*\mid \beta\neq \beta',\alpha_1=\alpha_1',\alpha_2=\alpha_2'\}, \\
S_2&=\{(\chi_{\beta,\alpha_1,\alpha_2},\chi_{\beta',\alpha_1',\alpha_2'})\in X^*\times X^*\mid \beta= \beta',(\alpha_1,\alpha_2)\neq (\alpha_1',\alpha_2')\}, \\
%S_{3,\beta}&=\left\{ \begin{array}{l}(\chi_{\beta_1,\alpha_1,\alpha_2},\chi_{\beta_1',\alpha_1',\alpha_2'})\\\quad \in X^*\times X^*\end{array} \left|
%\begin{array}{l}
%\beta_1 \neq \beta_1'\text{ and }\\
%((\alpha_1\neq\alpha_1',\frac{\alpha_2-\alpha_2'}{\alpha_1-\alpha_1'}=\varphi(\beta(-\beta_1+\beta_1')) \text{ if }\varphi(\beta(-\beta_1+\beta_1'))\neq y)\\ \text{ or }\\
%(\alpha_1=\alpha_1',\alpha_2\neq\alpha_2' \text{ if }\varphi(\beta(-\beta_1+\beta_1'))= y))
%\end{array}
%\right.\right\}, \\
S_{3,\beta}&=\{(\chi_{\beta_1,\alpha_1,\alpha_2},\chi_{\beta_1',\alpha_1',\alpha_2'})\in X^*\times X^* \mid
\beta_1 \neq \beta_1',
\alpha_1\neq\alpha_1',\frac{\alpha_2-\alpha_2'}{\alpha_1-\alpha_1'}=\varphi(\beta(-\beta_1+\beta_1'))\neq y
\}\\
&\quad \cup \{(\chi_{\beta_1,\alpha_1,\alpha_2},\chi_{\beta_1',\alpha_1',\alpha_2'})\in X^*\times X^* \mid
\beta_1 \neq \beta_1',
\alpha_1=\alpha_1',\alpha_2\neq\alpha_2', \varphi(\beta(-\beta_1+\beta_1'))= y)
\}
\end{align*}
where $\beta\in\mathbb{F}_{q+2}$.
By considering the bijection from $X$ to $X^*$ sending $(\beta,\alpha_1,\alpha_2)$ to $\chi_{\beta,\alpha_1,\alpha_2}$, we obtain the following result. 
\begin{theorem}
The association scheme is self-dual. 
\end{theorem}

We calculate the eigenmatrix using the additive characters of $\mathbb{F}_q$ and $\mathbb{F}_{q+2}$. 
\begin{align*}
\sum_{(\beta'',\alpha_1'',\alpha_2'')\in R_1(0,0,0)}\chi_{\beta',\alpha_1',\alpha_2'}(\beta'',\alpha_1'',\alpha_2'')&=\sum_{\beta''\in\mathbb{F}_{q+2}^*}\chi_{q+2}(\beta'\beta'')=\begin{cases}q+1 & \text{ if }\beta'=0,\\-1 & \text{ if }\beta'\neq0.\end{cases} 
\end{align*}
\begin{align*}
\sum_{(\beta'',\alpha_1'',\alpha_2'')\in R_2(0,0,0)}\chi_{\beta',\alpha_1',\alpha_2'}(\beta'',\alpha_1'',\alpha_2'')&=\sum_{\alpha_1'',\alpha_2''\in\mathbb{F}_q,(\alpha_1'',\alpha_2'')\neq(0,0)}\chi_{q}(\alpha_1'\alpha_1''+\alpha_2'\alpha_2'')\\
&=\begin{cases}
q^2-1 & \text{ if }\alpha_1'=\alpha_2'=0,\\
-1 & \text{ otherwise}.
\end{cases} 
\end{align*}

For $\beta\in\mathbb{F}_{q+2}^*$, 
\begin{align}
&\sum_{(\beta'',\alpha_1'',\alpha_2'')\in R_{3,\beta}(0,0,0)}\chi_{\beta',\alpha_1',\alpha_2'}(\beta'',\alpha_1'',\alpha_2'')\nonumber\\
&= 
\sum_{\beta''\in\mathbb{F}_{q+2}^*\setminus\{\frac{\varphi^{-1}(y)}{\beta}\},\alpha''\in\mathbb{F}_q^*}\chi_{\beta',\alpha_1',\alpha_2'}(\beta'',\alpha'',\varphi(\beta'' \beta)\alpha'')
+\sum_{\alpha''\in\mathbb{F}_q^*}\chi_{\beta',\alpha_1',\alpha_2'}(\frac{\varphi^{-1}(y)}{\beta},0,\alpha'')\nonumber\displaybreak[0]\\
&= 
\sum_{\beta''\in\mathbb{F}_{q+2}^*\setminus\{\frac{\varphi^{-1}(y)}{\beta}\}}\chi_{q+2}(\beta' \beta'')\sum_{\alpha''\in\mathbb{F}_q^*}\chi_{q}((\alpha_1'+\varphi(\beta'' \beta)\alpha_2')\alpha'')+\chi_{q+2}(\frac{\beta'\varphi^{-1}(y)}{\beta})\sum_{\alpha''\in\mathbb{F}_q^*}\chi_{q}(\alpha_2'\alpha''). \label{eq:1}
\end{align}
We now calculate \eqref{eq:1} case by case.  
\begin{enumerate}
\item In the case $\alpha_1'=\alpha_2'=0$, 
\begin{align*}
\eqref{eq:1}&=(q-1)\sum_{\beta''\in\mathbb{F}_{q+2}^*\setminus\{\frac{\varphi^{-1}(y)}{\beta}\}}\chi_{q+2}(\beta' \beta'')+(q-1)\chi_{q+2}(\frac{\beta'\varphi^{-1}(y)}{\beta})\\
&=(q-1)\sum_{\beta''\in\mathbb{F}_{q+2}^*}\chi_{q+2}(\beta' \beta'')\\
&=\begin{cases}
q^2-1  & \text{ if } \beta'=0, \\
-q+1   & \text{ if } \beta'\neq0. 
\end{cases}
\end{align*}
\item In the case $\alpha_1'\neq 0=\alpha_2'$,   
\begin{align*}
\eqref{eq:1}&=\sum_{\beta''\in\mathbb{F}_{q+2}^*\setminus\{\frac{\varphi^{-1}(y)}{\beta}\}}\chi_{q+2}(\beta' \beta'')\sum_{\alpha''\in\mathbb{F}_q^*}\chi_{q}((\alpha_1'+\varphi(\beta'' \beta)\alpha_2')\alpha'')+\chi_{q+2}(\frac{\beta'\varphi^{-1}(y)}{\beta})\sum_{\alpha''\in\mathbb{F}_q^*}\chi_{q}(\alpha_2'\alpha'')\\
&=\sum_{\beta''\in\mathbb{F}_{q+2}^*\setminus\{\frac{\varphi^{-1}(y)}{\beta}\}}\chi_{q+2}(\beta' \beta'')\sum_{\alpha''\in\mathbb{F}_q^*}\chi_{q}(\alpha_1'\alpha'')+\chi_{q+2}(\frac{\beta'\varphi^{-1}(y)}{\beta})\sum_{\alpha''\in\mathbb{F}_q^*}\chi_{q}(0)\\
&=-\sum_{\beta''\in\mathbb{F}_{q+2}^*\setminus\{\frac{\varphi^{-1}(y)}{\beta}\}}\chi_{q+2}(\beta' \beta'')+(q-1)\chi_{q+2}(\frac{\beta'\varphi^{-1}(y)}{\beta})\\
&=-\sum_{\beta''\in\mathbb{F}_{q+2}^*}\chi_{q+2}(\beta' \beta'')+q\chi_{q+2}(\frac{\beta'\varphi^{-1}(y)}{\beta})\\
%&=q\chi_{q+2}(\frac{\beta'\varphi^{-1}(y)}{\beta''})-1.\\
&=\begin{cases}
-1  & \text{ if } \beta'=0, \\
q\chi_{q+2}(\frac{\beta'\varphi^{-1}(y)}{\beta})+1   & \text{ if } \beta'\neq0. 
\end{cases}
\end{align*} 
\item In the case $\alpha_2'\neq 0$ there uniquely exists $\bar{\beta}\in\mathbb{F}_{q+2}$ such that $\alpha_1'+\varphi(\beta\bar{\beta})\alpha_2'=0$. Then   
\begin{align*}
\eqref{eq:1}&=\sum_{\beta''\in\mathbb{F}_{q+2}^*\setminus\{\frac{\varphi^{-1}(y)}{\beta}\}}\chi_{q+2}(\beta' \beta'')\sum_{\alpha''\in\mathbb{F}_q^*}\chi_{q}((\alpha_1'+\varphi(\beta'' \beta)\alpha_2')\alpha'')+\chi_{q+2}(\frac{\beta'\varphi^{-1}(y)}{\beta})\sum_{\alpha''\in\mathbb{F}_q^*}\chi_{q}(\alpha_2'\alpha'')\\
&=(q-1)\chi_{q+2}(\beta' \bar{\beta})-\sum_{\beta''\in\mathbb{F}_{q+2}^*\setminus\{\frac{\varphi^{-1}(y)}{\beta},\bar{\beta}\}}\chi_{q+2}(\beta' \beta'')-\chi_{q+2}(\frac{\beta'\varphi^{-1}(y)}{\beta})\\
&=q\chi_{q+2}(\beta' \bar{\beta})-\sum_{\beta''\in\mathbb{F}_{q+2}^*}\chi_{q+2}(\beta' \beta'')\\
%&=q \chi_{q+2}(\beta' \beta'')-\sum_{\beta\in\mathbb{F}_{q+2}^*\setminus\{\frac{\varphi^{-1}(y)}{\beta''}\}}\chi_{q+2}(\beta' \beta).
&=\begin{cases}
-1  & \text{ if } \beta'=0, \\
q\chi_{q+2}(\beta' \bar{\beta})+1   & \text{ if } \beta'\neq0. 
\end{cases}
\end{align*}
\end{enumerate}

Let $V_{i}$ be as follows
\begin{align*}
V_{0}&=\text{span}_\mathbb{C}\{\chi_{0,0,0}\},\\
V_{1}&=\text{span}_\mathbb{C}\{\chi_{\beta,0,0}\mid \beta\in\mathbb{F}_{q+2}^*\},\\
V_{2}&=\text{span}_\mathbb{C}\{\chi_{0,\alpha_1,\alpha_2}\mid \alpha_1,\alpha_2\in\mathbb{F}_{q}^*,(\alpha_1,\alpha_2)\neq(0,0)\},\\
V_{3,\tilde{\beta}}
&=\text{span}_\mathbb{C}\{\chi_{\beta',\alpha_1',\alpha_2'}\mid \beta'\in\mathbb{F}_{q+2}^*,\alpha_1'\in\mathbb{F}_q,\alpha_2'\in\mathbb{F}_q^*,\beta'\varphi^{-1}(-\frac{\alpha_1'}{\alpha_2'})=\tilde{\beta}\} \\
&\quad +\text{span}_\mathbb{C}\{\chi_{\beta',\alpha_1',0}\mid \beta'\in\mathbb{F}_{q+2}^*,\alpha_1'\in\mathbb{F}_q^*,\beta'\varphi^{-1}(y)=\tilde{\beta}\}, 
\end{align*}
where $\tilde{\beta}\in\mathbb{F}_{q+2}^*$. 
From the above calculation, $V_i$'s are maximal common eigenspaces of $A_0,A_1,A_2,A_{3,\beta}$ ($\beta\in\mathbb{F}_{q+2}^*$). 
Thus we obtain the following formula for the eigenmatrix.
\begin{theorem}
The first eigenmatrix $P$ of the association scheme is
%\begin{align*}
%P=\begin{pmatrix}
%1 &  q+1 & q^2-1 & \cdots & q^2-1 & \cdots \\
%1 &  -1  & q^2-1 & \cdots & -q+1 & \cdots \\
%1 &  q+1 & -1 & \cdots & -1 & \cdots \\
%\vdots &  \vdots & \vdots & \ddots & \vdots & \ddots \\
%1 &  -1 & -1 & \cdots & q \chi_{q+2}(\frac{\tilde{\beta}}{\beta})+1 & \vdots \\
%\vdots &  \vdots & \vdots & \ddots & \vdots & \ddots 
%\end{pmatrix}.
%\end{align*}

%\begin{align*}
%P=\bordermatrix{
%      & R_0 & R_1 & R_2 & \cdots & R_{3,\beta} & \cdots \cr
%V_0 & 1 &  q+1 & q^2-1 & \cdots & q^2-1 & \cdots \cr
%V_1 & 1 &  -1  & q^2-1 & \cdots & -q+1 & \cdots \cr
%V_2 & 1 &  q+1 & -1 & \cdots & -1 & \cdots \cr
%\vdots& \vdots &  \vdots & \vdots & \ddots & \vdots & \ddots \cr
%V_{3,\tilde{\beta}} & 1 &  -1 & -1 & \cdots & q \chi_{q+2}(\frac{\tilde{\beta}}{\beta})+1 & \vdots \cr
%\vdots& \vdots &  \vdots & \vdots & \ddots & \vdots & \ddots 
%}.
%\end{align*}

\begin{align*}
P=\bordermatrix{
      & R_0 & R_1 & R_2  & R_{3,\beta}  \cr
V_0 & 1 &  q+1 & q^2-1 & q^2-1  \cr
V_1 & 1 &  -1  & q^2-1  & -q+1  \cr
V_2 & 1 &  q+1 & -1  & -1  \cr
V_{3,\tilde{\beta}} & 1 &  -1 & -1  & q \chi_{q+2}(\frac{\tilde{\beta}}{\beta})+1
},
\end{align*}
where $\beta,\tilde{\beta}$ run over the set $\mathbb{F}_{q+2}^*$. 

\end{theorem}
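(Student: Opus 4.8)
The plan is to compute, for each maximal common eigenspace $V_i$ and each adjacency matrix, the associated eigenvalue via the character-sum formula for translation schemes, and then assemble these eigenvalues into the matrix $P$. Since the excerpt has already carried out essentially all of the character-sum computations, the proof is largely bookkeeping: one must verify that the eigenvalue of $A_1$ on $V_i$ is the sum $\sum_{x\in R_1(0,0,0)}\chi(x)$ for any character $\chi$ spanning $V_i$, and similarly for $A_2$ and each $A_{3,\beta}$, and that the dimensions of the $V_i$ add up to $|X|=(q+2)q^2$ so that they are exactly the maximal common eigenspaces (not merely contained in them).

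First I would record the three displayed character sums already computed: for $A_1$ the value is $q+1$ if the $\mathbb{F}_{q+2}$-component $\beta'$ of the character is $0$ and $-1$ otherwise; for $A_2$ it is $q^2-1$ if $(\alpha_1',\alpha_2')=(0,0)$ and $-1$ otherwise. Next I would invoke the three-case analysis of \eqref{eq:1}: case (i) ($\alpha_1'=\alpha_2'=0$) gives $q^2-1$ or $-q+1$ according as $\beta'=0$ or not; case (ii) ($\alpha_1'\neq 0=\alpha_2'$) gives $-1$ or $q\chi_{q+2}(\beta'\varphi^{-1}(y)/\beta)+1$; case (iii) ($\alpha_2'\neq 0$) gives $-1$ or $q\chi_{q+2}(\beta'\bar\beta)+1$ where $\bar\beta$ is determined by $\alpha_1'+\varphi(\beta\bar\beta)\alpha_2'=0$, i.e. $\bar\beta=\varphi^{-1}(-\alpha_1'/\alpha_2')$. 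Then for a character $\chi_{\beta',\alpha_1',\alpha_2'}$ in $V_{3,\tilde\beta}$, in the $\alpha_2'\neq 0$ part one has $\beta'\bar\beta=\beta'\varphi^{-1}(-\alpha_1'/\alpha_2')=\tilde\beta$, and in the $\alpha_2'=0$ part one has $\beta'\varphi^{-1}(y)=\tilde\beta$, so in both sub-cases the $A_{3,\beta}$-eigenvalue is $q\chi_{q+2}(\tilde\beta/\beta)+1$, which is constant on $V_{3,\tilde\beta}$ and hence confirms it is a common eigenspace.

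After matching the eigenvalues I would check that $V_0,V_1,V_2$ and the $V_{3,\tilde\beta}$ ($\tilde\beta\in\mathbb{F}_{q+2}^*$) are pairwise orthogonal (clear, since distinct characters are orthogonal and the spanning sets are disjoint) and that their dimensions sum correctly: $\dim V_0=1$, $\dim V_1=q+1$, $\dim V_2=q^2-1$, and for each $\tilde\beta$ the space $V_{3,\tilde\beta}$ is spanned by the characters with $\alpha_2'\neq 0$ and $\beta'\varphi^{-1}(-\alpha_1'/\alpha_2')=\tilde\beta$ (there are $q(q-1)$ such, since $\alpha_1'\in\mathbb{F}_q$, $\alpha_2'\in\mathbb{F}_q^*$, and then $\beta'$ is forced) together with those with $\alpha_2'=0$, $\alpha_1'\neq 0$, $\beta'\varphi^{-1}(y)=\tilde\beta$ (there are $q-1$ such, $\alpha_1'$ free in $\mathbb{F}_q^*$, $\beta'$ forced), giving $\dim V_{3,\tilde\beta}=q(q-1)+(q-1)=q^2-1$; the total is $1+(q+1)+(q^2-1)+(q+1)(q^2-1)=(q+2)q^2$, as required. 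Since we have $q+3$ pairwise orthogonal common eigenspaces of dimension summing to $|X|$, they are exactly the maximal common eigenspaces, the matrices $E_i$ projecting onto them are the primitive idempotents, and the table of eigenvalues just computed is by definition the first eigenmatrix $P$. The one genuinely delicate point — the main obstacle — is the case $\alpha_2'\neq 0$: one must check that the element $\bar\beta$ satisfying $\alpha_1'+\varphi(\beta\bar\beta)\alpha_2'=0$ exists, is unique, and equals $\varphi^{-1}(-\alpha_1'/\alpha_2')$ independently of $\beta$ in the sense that $\beta\bar\beta$ is the correct argument of $\varphi$, so that $\beta'\bar\beta$ depends only on the combination $\tilde\beta=\beta'\varphi^{-1}(-\alpha_1'/\alpha_2')$; this uses that $\varphi$ is a bijection onto $\mathbb{F}_q\cup\{x,y\}$ with $\varphi(0)=x$, and that $-\alpha_1'/\alpha_2'$ ranges over all of $\mathbb{F}_q$, so $\varphi^{-1}(-\alpha_1'/\alpha_2')$ makes sense and is nonzero exactly when $\alpha_1'\neq 0$. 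Once this is pinned down, the rest is a direct transcription of the already-completed sums into the rows and columns of $P$.
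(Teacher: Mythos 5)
Your proposal is correct and follows essentially the same route as the paper: compute the character sums over $R_1(0,0,0)$, $R_2(0,0,0)$, $R_{3,\beta}(0,0,0)$ for each character, observe they are constant on the spaces $V_0,V_1,V_2,V_{3,\tilde\beta}$, and read off $P$; your added dimension count $1+(q+1)+(q^2-1)+(q+1)(q^2-1)=(q+2)q^2$ is a worthwhile check the paper leaves implicit. One notational slip: from $\alpha_1'+\varphi(\beta\bar\beta)\alpha_2'=0$ one gets $\bar\beta=\varphi^{-1}(-\alpha_1'/\alpha_2')/\beta$ rather than $\varphi^{-1}(-\alpha_1'/\alpha_2')$, but since you correctly use $\beta\bar\beta=\varphi^{-1}(-\alpha_1'/\alpha_2')$ and arrive at $\beta'\bar\beta=\tilde\beta/\beta$, the final entry $q\chi_{q+2}(\tilde\beta/\beta)+1$ is right.
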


\begin{example}
We describe the construction for twin primes $3,5$.  

Let $\phi:\mathbb{F}_3=\{0,1,2\}\rightarrow GL_3(\mathbb{R});\phi(x)=(r_3)^x$ where $r_3=\left(\begin{smallmatrix}0 & 1 & 0 \\ 0 & 0 & 1 \\ 1 & 0 & 0 \end{smallmatrix}\right)$. 
The generalized Hadamard matrix is $H_3=\left(\begin{smallmatrix}0 & 0 & 0 \\ 0 & 1 & 2 \\ 0 & 2 & 1 \end{smallmatrix}\right)$. 
We construct three auxiliary matrices $C_0,C_1,C_2$ from $\mathbb{F}_3$;  
\begin{align*}
C_0=\begin{pmatrix}
\phi(0) & \phi(0) & \phi(0) \\
\phi(0) & \phi(0) & \phi(0) \\
\phi(0) & \phi(0) & \phi(0) 
\end{pmatrix},C_1=\begin{pmatrix}
\phi(0) & \phi(1) & \phi(2) \\
\phi(2) & \phi(0) & \phi(1) \\
\phi(1) & \phi(2) & \phi(0) 
\end{pmatrix},C_2=\begin{pmatrix}
\phi(0) & \phi(2) & \phi(1) \\
\phi(1) & \phi(0) & \phi(2) \\
\phi(2) & \phi(1) & \phi(0) 
\end{pmatrix}. 
\end{align*}
Furthermore, we let $C_x=O_9$ and $C_y=I_3\otimes J_3$ where $x,y$ are indeterminates. 

We construct four Latin squares $L_1,L_2,L_3,L_4$ from $\mathbb{F}_5=\{0,1,2,3,4\}$ which are mutually suitable Latin squares with constant diagonal entries. 
\begin{align*}
L_1&=\left(
\begin{array}{ccccc}
 0 & 1 & 2 & 3 & 4 \\
 4 & 0 & 1 & 2 & 3 \\
 3 & 4 & 0 & 1 & 2 \\
 2 & 3 & 4 & 0 & 1 \\
 1 & 2 & 3 & 4 & 0 \\
\end{array}
\right),\quad 
L_2=
\left(
\begin{array}{ccccc}
 0 & 2 & 4 & 1 & 3 \\
 3 & 0 & 2 & 4 & 1 \\
 1 & 3 & 0 & 2 & 4 \\
 4 & 1 & 3 & 0 & 2 \\
 2 & 4 & 1 & 3 & 0 \\
\end{array}
\right),\\
L_3&=\left(
\begin{array}{ccccc}
 0 & 3 & 1 & 4 & 2 \\
 2 & 0 & 3 & 1 & 4 \\
 4 & 2 & 0 & 3 & 1 \\
 1 & 4 & 2 & 0 & 3 \\
 3 & 1 & 4 & 2 & 0 \\
\end{array}
\right),\quad 
L_4=\left(
\begin{array}{ccccc}
 0 & 4 & 3 & 2 & 1 \\
 1 & 0 & 4 & 3 & 2 \\
 2 & 1 & 0 & 4 & 3 \\
 3 & 2 & 1 & 0 & 4 \\
 4 & 3 & 2 & 1 & 0 \\
\end{array}
\right).
\end{align*}

Fix a bijection $\varphi: \mathbb{F}_5\rightarrow \mathbb{F}_3\cup\{x,y\}$ such that $\varphi(0)=x$. 
We now define the incidence matrices of symmetric $(45,12,3)$-designs $N_i$ ($i\in\mathbb{F}_5^*$) by replacing $j\in\mathbb{F}_5$ in $L_i$ with $C_{\varphi(j)}$.
For example,  
\begin{align*}
N_1&=\left(
\begin{array}{ccccc}
 C_x & C_{\varphi(1)} & C_{\varphi(2)} & C_{\varphi(3)} & C_{\varphi(4)} \\
 C_{\varphi(4)} & C_x & C_{\varphi(1)} & C_{\varphi(2)} & C_{\varphi(3)} \\
 C_{\varphi(3)} & C_{\varphi(4)} & C_x & C_{\varphi(1)} & C_{\varphi(2)} \\
 C_{\varphi(2)} & C_{\varphi(3)} & C_{\varphi(4)} & C_x & C_{\varphi(1)} \\
 C_{\varphi(1)} & C_{\varphi(2)} & C_{\varphi(3)} & C_{\varphi(4)} & C_x \\
\end{array}
\right).
\end{align*}

Then the matrices $I_{45}, (J_{5}-I_{5})\otimes I_{9},I_{5}\otimes (J_{9}-I_{9})$, $N_i-(J_5-I_5)\otimes I_9$ ($i\in \mathbb{F}_5^*$)
 form a commutative association scheme with $6$ classes. 
The first eigenmatrix $P$ is 
\begin{align*}
P=\bordermatrix{
 & R_0 & R_1 & R_2 & R_{3,1} & R_{3,2} & R_{3,3} & R_{3,4} \cr
V_0 & 1 & 4 & 8 & 8 & 8 & 8 & 8  \cr
V_1 & 1 & -1& 8 & -2 & -2& -2& -2 \cr
V_2 & 1 & 4 & -1 & -1 & -1& -1& -1 \cr
V_{3,1} & 1 & -1 & -1 & 3w+1 & 3w^2+1& 3w^3+1& 3w^4+1  \cr
V_{3,2} & 1 & -1 & -1 & 3w^3+1 & 3w+1& 3w^4+1& 3w^2+1  \cr
V_{3,3} & 1 & -1 & -1 & 3w^2+1 & 3w^4+1& 3w+1& 3w^3+1  \cr
V_{3,4} & 1 & -1 & -1 & 3w^4+1 & 3w^3+1& 3w^2+1& 3w+1  
},
\end{align*}
where $w=\frac{\sqrt{5}-1}{4}+\sqrt{\frac{-5-\sqrt{5}}{8}}$. 
\end{example}

%%%%%%%%%%%%%%%%%%%%%%%%%%%%%%%%%%%%%%%%%%%%%%%%%%%%%%%%%%%%%%%%%%%%%%%%%%%%
\section{Association schemes obtained from Merssene primes and Fermat primes}\label{sec:asmfp}
In this section we use prime powers $q$ and $q+1$ to construct a set of symmetric group divisible designs and derive a commutative association scheme from it. 
\subsection{Symmetric group divisible designs}
Let $q,q+1$ be prime powers. 
Fix a bijection $\varphi:\mathbb{F}_{q+1}\rightarrow \mathbb{F}_q\cup\{x\}$ such that $\varphi(0)=x$. 
Consider a Latin square obtained from $L_\beta$ by replacing entries with their image of $\varphi$, which we denote by $L_{\varphi(\beta)}$. 
%From now, we regard $L_\beta$ ($\beta\in\mathbb{F}_{q+2}$) as Latin squares on $\mathbb{F}_q\cup\{x,y\}$ by replacing $\beta$ with $f(\beta)$ in the entries. 
Recall that we denote the $(u,v)$-entry of an array $L$ by $L(u,v)$. 
Then for $\beta,\beta',\beta''\in\mathbb{F}_{q+1}$, $L_{\varphi(\beta)}(\beta',\beta'')=\varphi(L_{\beta}(\beta',\beta''))$. 
%Then Latin squares $L_{\varphi(\beta)}$ $(\beta\in\mathbb{F}_{q+2}^*)$ have diagonal entries $x$.  

We now construct symmetric group divisible designs from auxiliary matrices for $\mathbb{F}_q$ and mutually suitable Latin squares for $\mathbb{F}_{q+1}$.
For $\beta\in\mathbb{F}_{q+1}^*$, we define a $(q+1)q^2\times (q+1)q^2$ $(0,1)$-matrix $N_\beta$ to be a $(q+1)\times (q+1)$ block matrix with rows and columns indexed by $\mathbb{F}_{q+1}$ whose $(\beta',\beta'')$-block matrix is $C_{L_{\varphi(\beta)}(\beta',\beta'')}$;  
\begin{align*}
N_\beta=(C_{L_{\varphi(\beta)}(\beta',\beta'')})_{\beta',\beta''\in\mathbb{F}_{q+1}}=\sum_{\gamma\in\mathbb{F}_{q+1}}P_{\beta,\gamma}\otimes C_{\varphi(\gamma)}. 
\end{align*}

\begin{proposition}\label{prop:mf1}
\begin{enumerate}
\item For any $\beta\in\mathbb{F}_{q+1}^*$, $N_\beta^\top=N_{-\beta}$.   
\item For any $\beta\in\mathbb{F}_{q+1}^*$, $N_{\beta}N_{-\beta}=q^2I_{(q+1)q^2}-q I_{(q+1)q}\otimes J_q+I_{q+1}\otimes J_{q^2}+(q-1)J_{(q+1)q^2}$. 
\item For any $\beta,\beta'\in\mathbb{F}_{q+1}^*$ such that $\beta+\beta'\neq 0$, $N_{\beta} N_{\beta'}=q N_{\frac{\beta\beta'}{\beta+\beta'}}+2I_{q+1}\otimes J_{q^2}+(q-2)J_{(q+1)q^2}$. 
\item For any $\beta\in\mathbb{F}_{q+1}^*$, $N_\beta(I_{(q+1)q}\otimes J_{q})=(I_{(q+1)q}\otimes J_{q})N_\beta=(J_{(q+1)q^2}-I_{q+1}\otimes J_{q^2})$.
\item For any $\beta\in\mathbb{F}_{q+1}^*$, $N_\beta(I_{q+1}\otimes J_{q^2})=(I_{q+1}\otimes J_{q^2})N_\beta=q(J_{(q+1)q^2}-I_{q+1}\otimes J_{q^2})$. 
\item $\sum_{\beta\in\mathbb{F}_{q+1}^*}N_\beta=(J_{q+1}-I_{q+1})\otimes (q I_{q^2}+(J_{q}-I_q)\otimes J_q)$. 
\end{enumerate}
\end{proposition}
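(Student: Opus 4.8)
The plan is to imitate the proof of Proposition~\ref{prop:1} almost verbatim, using the factorization $N_\beta=\sum_{\gamma\in\mathbb{F}_{q+1}}P_{\beta,\gamma}\otimes C_{\varphi(\gamma)}$ together with Lemmas~\ref{lem:mfc} and~\ref{lem:msl}, where now the field for the Latin squares is $\mathbb{F}_{q+1}$ and the field for the auxiliary matrices is $\mathbb{F}_q$. The one structural change from the twin prime powers case is that $\varphi$ maps onto $\mathbb{F}_q\cup\{x\}$ rather than $\mathbb{F}_q\cup\{x,y\}$: the only degenerate block among the $C_{\varphi(\gamma)}$ is $C_{\varphi(0)}=C_x=O_{q^2}$, and there is no block $C_y=I_q\otimes J_q$. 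Everywhere the proof of Proposition~\ref{prop:1} split off a $C_y$ term we now split off a zero term; this is precisely what replaces $\sum_{a\in\mathbb{F}_q\cup\{y\}}C_a=qI_{q^2}+J_{q^2}$ by $\sum_{a\in\mathbb{F}_q}C_a=qI_{q^2}+(J_q-I_q)\otimes J_q$ in the right-hand sides. Since $C_x=O$, every statement of Lemma~\ref{lem:mfc}(ii)--(v) that is quoted for $\mathbb{F}_q\cup\{y\}$ also holds trivially for $a=x$.

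Parts (i), (iv), (v), (vi) are one-line manipulations of the factorization. For (i), $P_{\beta,\gamma}^\top=P_{-\beta,\gamma}$ and each $C_a$ (including $C_x=O$) is a symmetric matrix, so $N_\beta^\top=\sum_\gamma P_{-\beta,\gamma}\otimes C_{\varphi(\gamma)}=N_{-\beta}$. For (iv), $C_{\varphi(\gamma)}(I_q\otimes J_q)=(I_q\otimes J_q)C_{\varphi(\gamma)}$ equals $J_{q^2}$ for $\gamma\neq0$ and $O$ for $\gamma=0$ by Lemma~\ref{lem:mfc}(v), so only the terms $\gamma\in\mathbb{F}_{q+1}^*$ survive and $\sum_{\gamma\in\mathbb{F}_{q+1}^*}P_{\beta,\gamma}=J_{q+1}-I_{q+1}$ finishes it; part (v) is the same with Lemma~\ref{lem:mfc}(iv) in place of (v). For (vi), interchange the two summations: $\sum_{\beta\in\mathbb{F}_{q+1}^*}N_\beta=\sum_{\gamma\in\mathbb{F}_{q+1}}\bigl(\sum_{\beta\in\mathbb{F}_{q+1}^*}P_{\beta,\gamma}\bigr)\otimes C_{\varphi(\gamma)}$; the inner sum is $qI_{q+1}$ for $\gamma=0$ and $J_{q+1}-I_{q+1}$ for $\gamma\neq0$, and since $C_{\varphi(0)}=O$ the $\gamma=0$ term vanishes, leaving $(J_{q+1}-I_{q+1})\otimes\sum_{a\in\mathbb{F}_q}C_a$, which is the claim by Lemma~\ref{lem:mfc}(i).

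For (ii) and (iii), expand $N_\beta N_{\beta'}=\sum_{\gamma,\gamma'\in\mathbb{F}_{q+1}}P_{\beta\beta',\beta\gamma'+\beta'\gamma}\otimes C_{\varphi(\gamma)}C_{\varphi(\gamma')}$ via Lemma~\ref{lem:msl}(ii) and split the double sum into the diagonal $\gamma=\gamma'$ and the off-diagonal $\gamma\neq\gamma'$. On the diagonal $(C_{\varphi(\gamma)})^2=qC_{\varphi(\gamma)}$ for every $\gamma$, giving $q\sum_{\gamma}P_{\beta\beta',(\beta+\beta')\gamma}\otimes C_{\varphi(\gamma)}$; off the diagonal the terms with $\gamma=0$ or $\gamma'=0$ drop because $C_x=O$, and for the remaining $\gamma,\gamma'\in\mathbb{F}_{q+1}^*$ with $\gamma\neq\gamma'$ we have $\varphi(\gamma),\varphi(\gamma')\in\mathbb{F}_q$ distinct, so $C_{\varphi(\gamma)}C_{\varphi(\gamma')}=J_{q^2}$ by Lemma~\ref{lem:mfc}(iii), giving $\bigl(\sum_{\gamma,\gamma'\in\mathbb{F}_{q+1}^*,\,\gamma\neq\gamma'}P_{\beta\beta',\beta\gamma'+\beta'\gamma}\bigr)\otimes J_{q^2}$. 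Now apply Lemma~\ref{lem:msl}(iii),(iv) over $\mathbb{F}_{q+1}$ (so the constants $q-2,q-3$ there become $q-1,q-2$), Lemma~\ref{lem:mfc}(i), and, when $\beta+\beta'\neq0$, Lemma~\ref{lem:msl}(i) to rewrite the diagonal part as $qN_{\frac{\beta\beta'}{\beta+\beta'}}$. When $\beta+\beta'=0$ this yields $qI_{q+1}\otimes(qI_{q^2}+(J_q-I_q)\otimes J_q)+(q-1)(J_{q+1}-I_{q+1})\otimes J_{q^2}$, which a short Kronecker rearrangement (using $J_q\otimes J_q=J_{q^2}$ and $I_{q+1}\otimes I_q\otimes J_q=I_{(q+1)q}\otimes J_q$) reshapes into the stated form; the case $\beta+\beta'\neq0$ is immediate.

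I expect the only genuine difficulty to be the Kronecker bookkeeping in (ii): one must keep straight which tensor factors are identities and which are all-ones matrices over $\mathbb{F}_q$ versus $\mathbb{F}_{q+1}$, and in particular verify that the two $I_{q+1}\otimes J_{q^2}$ contributions --- coefficient $q$ from $qI_{q+1}\otimes(J_q-I_q)\otimes J_q$ and coefficient $-(q-1)$ from $(q-1)(J_{q+1}-I_{q+1})\otimes J_{q^2}$ after expanding $J_{q+1}$ --- combine to coefficient $1$. It is equally essential to use the $\mathbb{F}_{q+1}$-versions of Lemma~\ref{lem:msl}(iii),(iv) rather than the $\mathbb{F}_q$-versions, which is exactly what produces the coefficient $q-2$ (not $q-1$) in part (iii).
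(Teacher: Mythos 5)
Your proposal is correct and follows essentially the same route as the paper: the paper likewise expands $N_\beta N_{\beta'}=\sum_{\gamma,\gamma'}P_{\beta\beta',\beta\gamma'+\beta'\gamma}\otimes C_{\varphi(\gamma)}C_{\varphi(\gamma')}$, splits off the diagonal, and applies Lemma~\ref{lem:mfc} and the $\mathbb{F}_{q+1}$-versions of Lemma~\ref{lem:msl}(iii),(iv), with the absence of $C_y$ accounting for the changed right-hand sides. Your bookkeeping of the $I_{q+1}\otimes J_{q^2}$ coefficients in (ii) and of the shifted constants $q-1,q-2$ matches the paper's computation exactly.
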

\begin{proof}
The proof is similar to the proof of Proposition~\ref{prop:1}, but for the sake of completeness we include a proof.
(i) is easy to see, and (iv), (v)  follow from Lemma~\ref{lem:mfc}(iv), (v) respectively.  
We prove (ii) as well as (iii) in a same manner as in Proposition~\ref{prop:1}.  
For $\beta,\beta'\in\mathbb{F}_{q+1}^*$, by Lemma~\ref{lem:mfc} (ii), (iii),
\begin{align}
N_\beta N_{\beta'}&=\sum_{\gamma,\gamma'\in\mathbb{F}_{q+1}}P_{\beta\beta',\beta\gamma'+\beta'\gamma}\otimes C_{\varphi(\gamma)}C_{\varphi(\gamma')}\nonumber\\
&=\sum_{\gamma\in\mathbb{F}_{q+1}}P_{\beta\beta',(\beta+\beta')\gamma}\otimes (C_{\varphi(\gamma)})^2+\sum_{\gamma,\gamma'\in\mathbb{F}_{q+1},\gamma\neq \gamma'}P_{\beta\beta',\beta\gamma'+\beta'\gamma}\otimes C_{\varphi(\gamma)}C_{\varphi(\gamma')}\nonumber\\
&=q\sum_{\gamma\in\mathbb{F}_{q+1}}P_{\beta\beta',(\beta+\beta')\gamma}\otimes C_{\varphi(\gamma)}+\sum_{\gamma,\gamma'\in\mathbb{F}_{q+1}^*,\gamma\neq \gamma'}P_{\beta\beta',\beta\gamma'+\beta'\gamma}\otimes J_{q^2}. \label{eq:mfnn} 
\end{align}
\eqref{eq:mfnn} is calculated depending on whether $\beta+\beta'=0$ or not as follows.
If $\beta+\beta'=0$, then 
\begin{align*}
\eqref{eq:mfnn}&=q\sum_{\gamma\in\mathbb{F}_{q+1}}P_{\beta\beta',0}\otimes C_{\varphi(\gamma)}+\sum_{\gamma,\gamma'\in\mathbb{F}_{q+1}^*,\gamma\neq \gamma'}P_{\beta\beta,\beta\gamma'+\beta'\gamma}\otimes J_{q^2}\\ 
&=qI_{q+1}\otimes \sum_{\gamma\in\mathbb{F}_{q+1}}C_{\varphi(\gamma)}+(q-1)(J_{q+1}-I_{q+1})\otimes J_{q^2}\text{ (by Lemma~\ref{lem:msl} (iv))}\\
&=qI_{q+1}\otimes (qI_{q^2}+(J_{q}-I_q)\otimes J_q)+(q-1)(J_{q+1}-I_{q+1})\otimes J_{q^2}\text{ (by Lemma~\ref{lem:mfc} (iv))}\\
&=q^2I_{(q+1)q^2}-q I_{(q+1)q}\otimes J_q+I_{q+1}\otimes J_{q^2}+(q-1)J_{(q+1)q^2}, 
\end{align*} 
which proves (ii). 
If $\beta+\beta'\neq0$, then 
\begin{align*}
\eqref{eq:mfnn}&=q\sum_{\gamma\in\mathbb{F}_{q+1}}P_{\frac{\beta\beta'}{\beta+\beta'},\gamma}\otimes C_{\varphi(\gamma)}+\sum_{\gamma,\gamma'\in\mathbb{F}_{q+1}^*,\gamma\neq \gamma'}P_{\beta\beta,\beta\gamma'+\beta'\gamma}\otimes J_{q^2}\text{ (by Lemma~\ref{lem:msl} (i))}\\ 
%&=q N_{\frac{\beta\beta'}{\beta+\beta'}}+\sum_{\gamma,\gamma'\in\mathbb{F}_{q+2}^*,\gamma\neq \gamma'}P_{\beta\beta,\beta\gamma'+\beta'\gamma}\otimes J_{q^2}\\
&=q N_{\frac{\beta\beta'}{\beta+\beta'}}+(2I_{q+1}+(q-2)J_{q+1})\otimes J_{q^2}\text{ (by Lemma~\ref{lem:msl} (iv))}\\
&=q N_{\frac{\beta\beta'}{\beta+\beta'}}+2I_{q+1}\otimes J_{q^2}+(q-2)J_{(q+1)q^2},
\end{align*} 
which proves (iii). 

For (vi),  
\begin{align*}
\sum_{\beta\in\mathbb{F}_{q+1}^*}N_\beta&=\sum_{\gamma\in\mathbb{F}_{q+1}^*}(\sum_{\beta\in\mathbb{F}_{q+1}^*}P_{\beta,\gamma})\otimes C_{\varphi(\gamma)}\\
&=\sum_{\gamma\in\mathbb{F}_{q+1}^*}(J_{q+1}-I_{q+1})\otimes C_{\varphi(\gamma)}\text{ (by $\sum_{\gamma\in\mathbb{F}_{q+1}^*}P_{\beta,\gamma}=J_{q+1}-I_{q+1}$)}\\
&=(J_{q+1}-I_{q+1})\otimes \sum_{\gamma\in\mathbb{F}_{q+1}^*}C_{\varphi(\gamma)}\\
&=(J_{q+1}-I_{q+1})\otimes (q I_{q^2}+(J_{q}-I_q)\otimes J_q). \text{ (by Lemma~\ref{lem:mfc}(i))} \qedhere
\end{align*} 
\end{proof}

\begin{corollary}
For any $\beta\in\mathbb{F}_{q+1}^*$, 
\begin{align*}
&(N_\beta+I_{q+1}\otimes (J_{q^2}-I_q \otimes J_q))(N_\beta^\top+I_{q+1}\otimes (J_{q^2}-I_q \otimes J_q))\\
&=q^2I_{(q+1)q^2}+(q-1)(q-3)I_{q+1}\otimes J_{q^2}+3(q-1)J_{(q+1)q^2},  
\end{align*}
that is, $N_\beta+I_{q+1}\otimes (J_{q^2}-I_q \otimes J_q)$ is the incidence matrix of a  symmetric group divisible design with parameters $((q+1)q^2,2q^2-q,q+1,q^2,q(q-1),3(q-1))$.  
\end{corollary}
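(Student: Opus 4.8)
The plan is to expand the product on the left-hand side directly using Proposition~\ref{prop:mf1}, treating $M_\beta := N_\beta + I_{q+1}\otimes(J_{q^2} - I_q\otimes J_q)$ as a perturbation of $N_\beta$ by a fixed symmetric matrix, and then matching the result to the defining equation \eqref{eq:gdd} for a symmetric group divisible design. First I would record the auxiliary matrix $D := I_{q+1}\otimes(J_{q^2} - I_q\otimes J_q)$ and note $D^\top = D$ and $M_\beta^\top = N_\beta^\top + D = N_{-\beta} + D$ by Proposition~\ref{prop:mf1}(i). Then $M_\beta M_\beta^\top = N_\beta N_{-\beta} + N_\beta D + D N_{-\beta} + D^2$, so the computation reduces to four pieces.

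The first piece, $N_\beta N_{-\beta}$, is given outright by Proposition~\ref{prop:mf1}(ii). For the cross terms $N_\beta D$ and $D N_{-\beta}$, I would write $D = I_{q+1}\otimes J_{q^2} - I_{(q+1)q}\otimes J_q$ and apply Proposition~\ref{prop:mf1}(iv) and (v): these give $N_\beta(I_{q+1}\otimes J_{q^2})$ and $N_\beta(I_{(q+1)q}\otimes J_q)$ (and the analogous right-multiplications, which are equal by the same lemma parts since those statements are two-sided), so $N_\beta D$ and $D N_{-\beta}$ each collapse to a linear combination of $J_{(q+1)q^2}$ and $I_{q+1}\otimes J_{q^2}$. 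The last piece $D^2 = (I_{q+1}\otimes(J_{q^2} - I_q\otimes J_q))^2$ is a routine Kronecker-product expansion using $J_{q^2}^2 = q^2 J_{q^2}$, $(I_q\otimes J_q)J_{q^2} = J_{q^2}(I_q\otimes J_q) = q J_{q^2}$, and $(I_q\otimes J_q)^2 = q\,(I_q\otimes J_q)$, which yields $D^2 = (q^2 - 2q)I_{q+1}\otimes J_{q^2} + q\,I_{(q+1)q}\otimes J_q$; note the $I_{(q+1)q}\otimes J_q$ terms must cancel against those coming from the cross terms, which is the one consistency check worth doing carefully.

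Adding the four contributions and collecting coefficients of $I_{(q+1)q^2}$, $I_{q+1}\otimes J_{q^2}$, $I_{(q+1)q}\otimes J_q$, and $J_{(q+1)q^2}$, I expect the $I_{(q+1)q}\otimes J_q$ coefficient to vanish and the remainder to match the claimed $q^2 I_{(q+1)q^2} + (q-1)(q-3)I_{q+1}\otimes J_{q^2} + 3(q-1)J_{(q+1)q^2}$. Finally, to read off the design parameters, I would rewrite this in the form of \eqref{eq:gdd}, i.e. as $k I_v + \lambda_1(I_m\otimes J_n - I_v) + \lambda_2(J_v - I_m\otimes J_n)$ with $v = (q+1)q^2$, $m = q+1$, $n = q^2$: comparing coefficients of $I_v$, of $I_m\otimes J_n$, and of $J_v$ gives $k = q^2 + (q-1)(q-3) + \dots$ — more precisely $k - \lambda_1 = q^2$ from the $I_v$ term, $\lambda_1 - \lambda_2 = (q-1)(q-3)$ from the $I_m\otimes J_n$ term, and $\lambda_2 = 3(q-1)$ from the $J_v$ term, which solve to $\lambda_2 = 3(q-1)$, $\lambda_1 = (q-1)(q-3) + 3(q-1) = q(q-1)$, and $k = q^2 + q(q-1) = 2q^2 - q$, matching the stated parameters $((q+1)q^2, 2q^2-q, q+1, q^2, q(q-1), 3(q-1))$.

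The main obstacle is not any single deep step but bookkeeping: keeping the four distinct "building block" matrices $I_{(q+1)q^2}$, $I_{q+1}\otimes J_{q^2}$, $I_{(q+1)q}\otimes J_q$, $J_{(q+1)q^2}$ straight through several Kronecker-product manipulations, and in particular verifying that the $I_{(q+1)q}\otimes J_q$ contributions cancel (this cancellation is exactly why the specific perturbation $D$ was chosen, and it is the part most prone to sign or coefficient error). One should also check that $M_\beta$ is genuinely a $(0,1)$-matrix: $N_\beta$ has zero blocks precisely on the positions of $I_{q+1}\otimes(I_q\otimes J_q)$ within the diagonal $I_{q+1}$-blocks — namely where $C_{\varphi(\gamma)}$ with $\varphi(\gamma) = x$ appears, i.e. $C_x = O_{q^2}$ — and adding $D = I_{q+1}\otimes(J_{q^2} - I_q\otimes J_q)$ fills in exactly the complementary $(0,1)$-pattern there, so $M_\beta$ has entries in $\{0,1\}$; also $M_\beta M_\beta^\top = M_\beta^\top M_\beta$ follows since everything in sight commutes (the $N_\beta$ commute by Proposition~\ref{prop:mf1}, and $D$ commutes with each $N_\beta$ by parts (iv)–(v)). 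Once the $(0,1)$ and commutativity remarks are in place, the verification of \eqref{eq:gdd} is the whole content.
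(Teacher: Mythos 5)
Your proposal is correct, and it is exactly the intended derivation: the paper states this corollary without proof, but it is meant to follow from Proposition~\ref{prop:mf1}(ii),(iv),(v) in precisely the way you expand $(N_\beta+D)(N_{-\beta}+D)$, and your coefficient bookkeeping (including the cancellation of the $I_{(q+1)q}\otimes J_q$ terms and the extraction of $k=2q^2-q$, $\lambda_1=q(q-1)$, $\lambda_2=3(q-1)$ from \eqref{eq:gdd}) all checks out.
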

Note that by Proposition~\ref{prop:mf1}(iii), the incidence matrices $N_\beta$ ($\beta\in\mathbb{F}_{q+1}^*$) are commuting.  

\subsection{An association scheme with $q+4$ classes and  its eigenmatrices}
We define the adjacency matrices as 
\begin{align*}
A_0&=I_{(q+1)q^2},\\
A_1&=I_{q+1}\otimes I_{q}\otimes (J_q-I_q),\\
A_2&=I_{q+1}\otimes (J_q-I_q)\otimes J_q, \\
A_3&=(J_{q+1}-I_{q+1})\otimes I_{q^2},\\
A_4&=(J_{q+1}-I_{q+1})\otimes I_{q}\otimes (J_q-I_q), \\
A_{5,\beta}&=N_\beta-A_3 \quad (\beta\in\mathbb{F}_{q+1}^*).
\end{align*}

Using Proposition~\ref{prop:mf1}, we obtain the following theorem. 
\begin{theorem}\label{thm:mfas}
The matrices $A_0,A_1,A_2,A_3,A_4,A_{5,\beta}$ {\rm(}$\beta\in\mathbb{F}_{q+1}^*${\rm)} form a commutative association scheme with $q+4$ classes. 
\end{theorem}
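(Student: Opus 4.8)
The plan is to verify the five axioms (AS1)--(AS5) for the list $A_0,A_1,A_2,A_3,A_4,A_{5,\beta}$ ($\beta\in\mathbb{F}_{q+1}^*$), exactly as in the proof of Theorem~\ref{thm:as}, using Proposition~\ref{prop:mf1} as the computational engine. First I would check that these are genuine $(0,1)$-matrices: the only non-obvious one is $A_{5,\beta}=N_\beta-A_3$, and since the $(\beta',\beta'')$-block of $N_\beta$ with $\beta'\neq\beta''$ is $C_{\varphi(L_\beta(\beta',\beta''))}$ and equals $J_{q^2}$ exactly when $\varphi(L_\beta(\beta',\beta''))=x$ (i.e. $C_x=O_{q^2}$ does \emph{not} occur on off-diagonal blocks since $\varphi(0)=x$ but $L_\beta$ has zero diagonal), one sees $N_\beta\circ A_3=A_3$, so the subtraction just zeroes out the off-diagonal all-ones blocks and leaves a $(0,1)$-matrix. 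Axiom (AS1) is immediate. For (AS2) I would observe $A_1+A_2 = I_{q+1}\otimes(J_{q^2}-I_{q^2})$, $A_3+A_4 = (J_{q+1}-I_{q+1})\otimes(I_q\otimes J_q)$, and $\sum_{\beta}A_{5,\beta} = \sum_\beta N_\beta - q A_3$; by Proposition~\ref{prop:mf1}(vi) this last sum is $(J_{q+1}-I_{q+1})\otimes(qI_{q^2}+(J_q-I_q)\otimes J_q) - q(J_{q+1}-I_{q+1})\otimes I_{q^2} = (J_{q+1}-I_{q+1})\otimes(J_{q^2}-I_q\otimes J_q)$, and adding $A_0+\dots+A_4$ telescopes to $J_{(q+1)q^2}$. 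For (AS3), $A_0,A_1,A_2,A_3,A_4$ are symmetric, and Proposition~\ref{prop:mf1}(i) gives $A_{5,\beta}^\top = N_{-\beta}-A_3 = A_{5,-\beta}$, so transposition permutes the adjacency matrices.

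For (AS4) and (AS5) — closure under multiplication and commutativity — the strategy is to reduce every product to a linear combination of the $A_i$ via the following dictionary of "building blocks": $I_{(q+1)q^2}$, $I_{q+1}\otimes I_q\otimes J_q$, $I_{q+1}\otimes J_{q^2}$, $(J_{q+1}-I_{q+1})\otimes I_{q^2}$, $(J_{q+1}-I_{q+1})\otimes I_q\otimes J_q$, $J_{(q+1)q^2}$, and the $N_\beta$. Each of these is manifestly a combination of $A_0,\dots,A_4,A_{5,\beta}$, and the algebra they generate is commutative because the tensor factors $I_{q+1},J_{q+1}$ on the left commute and $I_{q^2}, I_q\otimes J_q, J_{q^2}$ on the right commute, while the mixing products involving $N_\beta$ are governed by Proposition~\ref{prop:mf1}. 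Concretely: products among $A_1,A_2,A_3,A_4$ are routine tensor computations (e.g. $A_1 A_2 = (q-1)(q-1)$-type coefficients times $I_{q+1}\otimes(I_q\otimes J_q - \text{lower terms})$, using $(J_q-I_q)^2 = (q-2)(J_q-I_q)+(q-1)I_q$ and $(J_q-I_q)J_q = (q-1)J_q$); products $A_i A_{5,\beta}$ follow from parts (iv) and (v) of Proposition~\ref{prop:mf1} together with $A_{5,\beta}=N_\beta-A_3$; and the crucial products $A_{5,\beta}A_{5,\beta'}$ expand as $N_\beta N_{\beta'} - A_3 N_{\beta'} - N_\beta A_3 + A_3^2$, where $N_\beta N_{\beta'}$ is given by Proposition~\ref{prop:mf1}(ii) when $\beta+\beta'=0$ and (iii) when $\beta+\beta'\neq 0$, $A_3 N_{\beta'}$ and $N_\beta A_3$ reduce via the block structure (each equals $\sum_{\gamma}(J_{q+1}-I_{q+1})P_{\beta',\gamma}\otimes C_{\varphi(\gamma)}$-type sums that collapse using $\sum_\gamma P_{\beta',\gamma}=J_{q+1}$ and Lemma~\ref{lem:mfc}(i)), and $A_3^2 = (J_{q+1}-I_{q+1})^2\otimes I_{q^2} = (qI_{q+1}+(q-1)(J_{q+1}-I_{q+1}))\otimes I_{q^2}$.

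The main obstacle I anticipate is purely organizational rather than conceptual: the product $A_{5,\beta}A_{5,\beta'}$ in the generic case $\beta+\beta'\neq 0$ produces a term $qN_{\beta\beta'/(\beta+\beta')} = qA_{5,\beta\beta'/(\beta+\beta')}+qA_3$, so one must carefully collect the $A_3$ contributions from all four pieces of the expansion and confirm the surviving combination lies in $\mathrm{span}\{A_0,A_1,A_2,A_3,A_4,A_{5,\gamma}\}$ with the right index $\gamma=\beta\beta'/(\beta+\beta')$; getting the coefficients of $I_{q+1}\otimes J_{q^2}$ and $J_{(q+1)q^2}$ correct — and then re-expressing $I_{q+1}\otimes J_{q^2} = A_0+A_1+A_2$ and $J_{(q+1)q^2}=\sum_i A_i$ — is where bookkeeping errors would creep in. Once the multiplication table is assembled, commutativity (AS5) is visible by inspection since every entry is symmetric in the obvious way (the map $\beta\mapsto -\beta$ and the symmetry of $\beta\beta'/(\beta+\beta')$ in $\beta,\beta'$ handle the $A_{5,\beta}$ rows/columns), completing the proof.

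\begin{proof}
Since $N_\beta\circ A_3=A_3$ for every $\beta\in\mathbb{F}_{q+1}^*$, where $\circ$ denotes the entrywise product, the matrices $A_0,A_1,A_2,A_3,A_4,A_{5,\beta}$ ($\beta\in\mathbb{F}_{q+1}^*$) are $(0,1)$-matrices. Axiom (AS1) is clear. For (AS2), we have
\begin{align*}
A_0+A_1+A_2 &= I_{q+1}\otimes J_{q^2},\\
A_3+A_4 &= (J_{q+1}-I_{q+1})\otimes(I_q\otimes J_q),\\
\sum_{\beta\in\mathbb{F}_{q+1}^*}A_{5,\beta} &= \sum_{\beta\in\mathbb{F}_{q+1}^*}N_\beta - qA_3 = (J_{q+1}-I_{q+1})\otimes\bigl(J_{q^2}-I_q\otimes J_q\bigr),
\end{align*}
where the last line uses Proposition~\ref{prop:mf1}(vi), and summing these three identities gives $J_{(q+1)q^2}$. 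Axiom (AS3) holds because $A_0,\dots,A_4$ are symmetric and $A_{5,\beta}^\top = N_{-\beta}-A_3 = A_{5,-\beta}$ by Proposition~\ref{prop:mf1}(i). For (AS4) and (AS5), all of $A_0,\dots,A_4$ lie in the commutative algebra generated by the pairwise commuting matrices $I_{q+1}, J_{q+1}, I_q\otimes J_q, J_{q^2}$, so their mutual products are linear combinations of $A_0,\dots,A_4$. The products $A_iA_{5,\beta}$ for $i\in\{1,2,3,4\}$ follow from $A_{5,\beta}=N_\beta-A_3$ together with parts (iv) and (v) of Proposition~\ref{prop:mf1}, which express $N_\beta(I_{q+1}\otimes J_{q^2})$ and $N_\beta(I_{(q+1)q}\otimes J_q)$ in terms of $J_{(q+1)q^2}$ and $I_{q+1}\otimes J_{q^2}$; in each case the result is a linear combination of $A_0,\dots,A_4,A_{5,\gamma}$. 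Finally,
\begin{align*}
A_{5,\beta}A_{5,\beta'} = N_\beta N_{\beta'} - A_3N_{\beta'} - N_\beta A_3 + A_3^2,
\end{align*}
where $N_\beta N_{\beta'}$ is given by Proposition~\ref{prop:mf1}(ii) if $\beta+\beta'=0$ and by Proposition~\ref{prop:mf1}(iii) if $\beta+\beta'\neq0$, $A_3N_{\beta'}=N_\beta A_3$ reduces to a multiple of $J_{(q+1)q^2}-I_{q+1}\otimes J_{q^2}$ via $\sum_{\gamma}P_{\beta',\gamma}=J_{q+1}$ and Lemma~\ref{lem:mfc}(i), and $A_3^2=\bigl(qI_{q+1}+(q-1)(J_{q+1}-I_{q+1})\bigr)\otimes I_{q^2}$. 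Collecting terms and using $I_{q+1}\otimes J_{q^2}=A_0+A_1+A_2$ and $J_{(q+1)q^2}=A_0+A_1+A_2+A_3+A_4+\sum_{\gamma}A_{5,\gamma}$, we see $A_{5,\beta}A_{5,\beta'}$ is a linear combination of $A_0,A_1,A_2,A_3,A_4,A_{5,\gamma}$ (with the term $qA_{5,\beta\beta'/(\beta+\beta')}$ appearing when $\beta+\beta'\neq0$). All these products are symmetric under interchanging the two factors, since $\beta\mapsto-\beta$ is an involution and $\beta\beta'/(\beta+\beta')$ is symmetric in $\beta,\beta'$, so (AS5) holds as well. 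This establishes that $A_0,A_1,A_2,A_3,A_4,A_{5,\beta}$ ($\beta\in\mathbb{F}_{q+1}^*$) form a commutative association scheme with $q+4$ classes.
\end{proof}
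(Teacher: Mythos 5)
Your proof is correct and takes essentially the same route as the paper: verifying (AS1)--(AS5) directly, with Proposition~\ref{prop:mf1} and Lemma~\ref{lem:mfc} supplying all products involving the $N_\beta$, only with more of the bookkeeping written out than the paper gives. One small misstatement: products among $A_0,\dots,A_4$ are not always linear combinations of $A_0,\dots,A_4$ alone --- for instance $A_2A_4=(q-1)(J_{q+1}-I_{q+1})\otimes(J_q-I_q)\otimes J_q=(q-1)\sum_{\beta\in\mathbb{F}_{q+1}^*}A_{5,\beta}$ --- but every such product does lie in the span of the full set of adjacency matrices, which is all that (AS4) requires, so the argument stands.
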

\begin{proof}
It is easy to see that the conditions (AS1), (AS2), (AS3) hold. 

We check (AS4) case by case. Let $\mathcal{A}$ be the vector space over the complex number field spanned by $A_i,A_{5,\beta}$ ($i\in\{0,1,2,3,4\},\beta\in\mathbb{F}_{q+1}^*$). 
For $i,j\in\{0,1,2,3,4\}$, it is trivial that $A_iA_j\in\mathcal{A}$.   
For $i\in\{1,2\}$ and $\beta\in\mathbb{F}_{q+1}$, $A_i A_{5,\beta}\in\mathcal{A}$ holds by Lemma~\ref{lem:mfc} (iv), (v).   
From Lemma~\ref{lem:mfc} and the fact that $P_{\beta,\gamma}J_{q^2}=J_{q^2}P_{\beta,\gamma}=J_{q^2}$, it follows that ($J_{q^2}\otimes I_{q^2})N_\beta$ and $(J_{q+1}\otimes I_q\otimes J_q)N_\beta$ are both in $\mathcal{A}$, and therefore we have that $A_i A_{5,\beta}\in\mathcal{A}$ for $i\in\{3,4\}$ and $\beta\in\mathbb{F}_{q+1}$.  
Thus $\mathcal{A}$ is closed under the ordinary matrix multiplication. 
Finally (AS5) follows from Proposition~\ref{prop:mf1}(iii), (iv), (v). 
\end{proof}
Note that the associations scheme in Theorem~\ref{thm:mfas} is symmetric if and only if $q+1$ is even. 

We further investigate the association scheme. 
Let $X=\mathbb{F}_{q+1}\times \mathbb{F}_q\times \mathbb{F}_q$. 
The binary relations on $X$ with adjacency matrices being $A_0,A_1,A_2,A_3,A_4,A_{5,\beta}$ ($\beta\in\mathbb{F}_{q+1}^*$) are given as follows:
\begin{align*}
R_0&=\{(x,x)\mid x\in X\},\\
R_1&=\{((b,a_1,a_2),(b',a_1',a_2'))\in X\times X \mid b= b',a_1=a_1',a_2\neq a_2'\},\\
R_2&=\{((b,a_1,a_2),(b',a_1',a_2'))\in X\times X \mid b= b',a_1\neq a_1'\}, \\
R_3&=\{((b,a_1,a_2),(b',a_1',a_2'))\in X\times X \mid b\neq b',a_1=a_1',a_2=a_2'\},\\
R_4&=\{((b,a_1,a_2),(b',a_1',a_2'))\in X\times X \mid b\neq  b',a_1= a_1',a_2\neq a_2'\}, \\
R_{5,\beta}&=\{((\beta_1,\alpha_1,\alpha_2),(\beta_1',\alpha_{1}',\alpha_{2}') \in X\times X \mid
\beta_1 \neq \beta_1',\alpha_1\neq\alpha_1',\frac{\alpha_2-\alpha_2'}{\alpha_1-\alpha_1'}=\varphi(\beta(-\beta_1+\beta_1'))\}
%&\quad \cup \{((\beta_1,\alpha_1,\alpha_2),(\beta_1',\alpha_{1}',\alpha_{2}') \in X\times X \mid
%\alpha_1=\alpha_1',\alpha_2\neq\alpha_2', \varphi(\beta(-\beta_1+\beta_1'))= y\}
\end{align*}
where $\beta\in\mathbb{F}_{q+1}^*$. 
It is clear that the binary relations are closed under the addition, %have the property that $(x,y)\in R_i$ implies that $(x+z,y+z)\in R_i$ for any $z\in X$, 
that is, we have the following. 
\begin{theorem}
The association scheme is a translation scheme. 
\end{theorem}
The dual association scheme is $(X^*,\{S_0,S_1,S_2,S_3,S_4,S_{5,\beta}\mid\beta\in \mathbb{F}_{q+1}^*\})$ defined as follows: $X^*$ is the dual group of the additive group $\mathbb{F}_{q+1}\times \mathbb{F}_q\times \mathbb{F}_q$. 
Let $\chi_q$ and $\chi_{q+1}$ be the canonical additive characters of $\mathbb{F}_q$ and $\mathbb{F}_{q+1}$ respectively. 
For $\alpha_1,\alpha_2\in \mathbb{F}_q$ and $\beta\in\mathbb{F}_{q+1}$, we define a character $\chi_{\beta,\alpha_1,\alpha_2}$ of $\mathbb{F}_{q+1}\times \mathbb{F}_q\times \mathbb{F}_q$ by $\chi_{\beta',\alpha_1',\alpha_2'}(\beta,\alpha_1,\alpha_2)=\chi_{q+1}(\beta' \beta)\chi_{q}(\alpha_1'\alpha+\alpha_2'\alpha_2)$. 
Then 
\begin{align*}
S_0&=\{(\chi,\chi)\mid \chi\in X^*\}, \\
S_1&=\{(\chi_{\beta,\alpha_1,\alpha_2},\chi_{\beta',\alpha_1',\alpha_2'})\in X^*\times X^*\mid \beta= \beta',\alpha_1=\alpha_1',\alpha_2\neq \alpha_2'\}, \\
S_2&=\{(\chi_{\beta,\alpha_1,\alpha_2},\chi_{\beta',\alpha_1',\alpha_2'})\in X^*\times X^*\mid \beta= \beta',\alpha_1\neq \alpha_1'\}, \\
S_3&=\{(\chi_{\beta,\alpha_1,\alpha_2},\chi_{\beta',\alpha_1',\alpha_2'})\in X^*\times X^*\mid \beta\neq \beta',\alpha_1=\alpha_1',\alpha_2=\alpha_2'\}, \\
S_4&=\{(\chi_{\beta,\alpha_1,\alpha_2},\chi_{\beta',\alpha_1',\alpha_2'})\in X^*\times X^*\mid \beta\neq \beta',\alpha_1=\alpha_1',\alpha_2\neq \alpha_2'\}, \\
S_{5,\beta}&=\{(\chi_{\beta_1,\alpha_1,\alpha_2},\chi_{\beta_1',\alpha_1',\alpha_2'})\in X^*\times X^* \mid
\beta_1 \neq \beta_1',
\alpha_1\neq\alpha_1',\frac{\alpha_2-\alpha_2'}{\alpha_1-\alpha_1'}=\varphi(\beta(-\beta_1+\beta_1'))
\}
%&\quad \cup \{(\chi_{\beta_1,\alpha_1,\alpha_2},\chi_{\beta_1',\alpha_1',\alpha_2'})\in X^*\times X^* \mid
%\beta_1 \neq \beta_1',
%\alpha_1=\alpha_1',\alpha_2\neq\alpha_2', \varphi(\beta(-\beta_1+\beta_1'))= y)
%\}
\end{align*}
where $\beta\in\mathbb{F}_{q+1}$.
By considering the bijection from $X$ to $X^*$ sending $(\beta,\alpha_1,\alpha_2)$ to $\chi_{\beta,\alpha_1,\alpha_2}$, we obtain the following result. 
\begin{theorem}
The association scheme is self-dual. 
\end{theorem}

We calculate the eigenmatrix using the additive characters of $\mathbb{F}_q$ and $\mathbb{F}_{q+1}$.
\begin{align*}
\sum_{(\beta'',\alpha_1'',\alpha_2'')\in R_1(0,0,0)}\chi_{\beta',\alpha_1',\alpha_2'}(\beta'',\alpha_1'',\alpha_2'')&=\sum_{\alpha_2''\in\mathbb{F}_q^*}\chi_{q}(\alpha_2'\alpha_2'')
=\begin{cases}q-1 & \text{ if }\alpha_2'=0,\\-1 & \text{ if }\alpha_2'\neq0.\end{cases} 
\end{align*}

\begin{align*}
\sum_{(\beta'',\alpha_1'',\alpha_2'')\in R_2(0,0,0)}\chi_{\beta',\alpha_1',\alpha_2'}(\beta'',\alpha_1'',\alpha_2'')&
=(\sum_{\alpha_1''\in\mathbb{F}_q^*}\chi_{q}(\alpha_1'\alpha_1''))(\sum_{\alpha_2''\in\mathbb{F}_q}\chi_{q}(\alpha_2'\alpha_2''))\\
&=\begin{cases}
(q-1)q & \text{ if }\alpha_1'=0,\alpha_2'=0,\\
-q & \text{ if }\alpha_1'\neq0,\alpha_2'=0,\\
0 & \text{ if }\alpha_2'\neq 0.\end{cases} 
\end{align*}

\begin{align*}
\sum_{(\beta'',\alpha_1'',\alpha_2'')\in R_3(0,0,0)}\chi_{\beta',\alpha_1',\alpha_2'}(\beta'',\alpha_1'',\alpha_2'')&=\sum_{\beta''\in\mathbb{F}_{q+1}^*}\chi_{q+1}(\beta'\beta'')=\begin{cases}q & \text{ if }\beta'=0,\\-1 & \text{ if }\beta'\neq0.\end{cases} 
\end{align*}

\begin{align*}
\sum_{(\beta'',\alpha_1'',\alpha_2'')\in R_4(0,0,0)}\chi_{\beta',\alpha_1',\alpha_2'}(\beta'',\alpha_1'',\alpha_2'')&=(\sum_{\beta''\in\mathbb{F}_{q+1}^*}\chi_{q+1}(\beta'\beta''))(\sum_{\alpha_2''\in\mathbb{F}_q^*}\chi_{q}(\alpha_2'\alpha_2''))\\
&=\begin{cases}(q-1)q & \text{ if }\beta'=0,\alpha_2'=0,\\-q & \text{ if }\beta'\neq0,\alpha_2'=0,\\ -q+1 & \text{ if }\beta'=0,\alpha_2'\neq 0,\\ 1 & \text{ if }\beta'\neq0,\alpha_2'\neq 0.\end{cases} 
\end{align*}

For $\beta\in\mathbb{F}_{q+1}^*$, 
\begin{align}
&\sum_{(\beta'',\alpha_1'',\alpha_2'')\in R_{5,\beta}(0,0,0)}\chi_{\beta',\alpha_1',\alpha_2'}(\beta'',\alpha_1'',\alpha_2'')\nonumber\\
&= 
\sum_{\beta''\in\mathbb{F}_{q+2}^*,\alpha''\in\mathbb{F}_q^*}\chi_{\beta',\alpha_1',\alpha_2'}(\beta'',\alpha'',\varphi(\beta'' \beta)\alpha'')
\nonumber\displaybreak[0]\\
&= 
\sum_{\beta''\in\mathbb{F}_{q+2}^*}\chi_{q+2}(\beta' \beta'')\sum_{\alpha''\in\mathbb{F}_q^*}\chi_{q}((\alpha_1'+\varphi(\beta'' \beta)\alpha_2')\alpha''). \label{eq:mf1}
\end{align}

We now calculate \eqref{eq:mf1} case by case.  
\begin{enumerate}
\item In the case $\alpha_1'=\alpha_2'=0$, 
\begin{align*}
\eqref{eq:mf1}&=(q-1)\sum_{\beta''\in\mathbb{F}_{q+1}^*}\chi_{q+1}(\beta' \beta'')\\
&=\begin{cases}
q(q-1)  & \text{ if } \beta'=0, \\
-q+1   & \text{ if } \beta'\neq0. 
\end{cases}
\end{align*}
\item In the case $\alpha_1'\neq 0=\alpha_2'$,   
\begin{align*}
\eqref{eq:mf1}&=\sum_{\beta''\in\mathbb{F}_{q+1}^*}\chi_{q+1}(\beta' \beta'')\sum_{\alpha''\in\mathbb{F}_q^*}\chi_{q}(\alpha_1'\alpha'')\\
&=-\sum_{\beta''\in\mathbb{F}_{q+1}^*}\chi_{q+1}(\beta' \beta'')\\
&=\begin{cases}
-q  & \text{ if } \beta'=0, \\
1   & \text{ if } \beta'\neq0. 
\end{cases}
\end{align*} 

\item In the case $\alpha_2'\neq 0$ there uniquely exists $\bar{\beta}\in\mathbb{F}_{q+1}$ such that $\alpha_1'+\varphi(\beta\bar{\beta})\alpha_2'=0$. Then   
\begin{align*}
\eqref{eq:mf1}&= 
\sum_{\beta''\in\mathbb{F}_{q+2}^*}\chi_{q+1}(\beta' \beta'')\sum_{\alpha''\in\mathbb{F}_q^*}\chi_{q}((\alpha_1'+\varphi(\beta'' \beta)\alpha_2')\alpha'')\\
&=(q-1)\chi_{q+1}(\beta' \bar{\beta})-\sum_{\beta''\in\mathbb{F}_{q+1}^*\setminus\{\bar{\beta}\}}\chi_{q+1}(\beta' \beta'')\\
&=q\chi_{q+1}(\beta' \bar{\beta})-\sum_{\beta''\in\mathbb{F}_{q+1}^*}\chi_{q+1}(\beta' \beta'')\\
%&=q \chi_{q+2}(\beta' \beta'')-\sum_{\beta\in\mathbb{F}_{q+2}^*\setminus\{\frac{\varphi^{-1}(y)}{\beta''}\}}\chi_{q+2}(\beta' \beta).
&=\begin{cases}
0  & \text{ if } \beta'=0, \\
q\chi_{q+1}(\beta' \bar{\beta})+1   & \text{ if } \beta'\neq0. 
\end{cases}
\end{align*}
\end{enumerate}

Let $V_{i}$ be as follows
\begin{align*}
V_{0}&=\text{span}_\mathbb{C}\{\chi_{0,0,0}\},\\
V_{1}&=\text{span}_\mathbb{C}\{\chi_{0,\alpha_1,0}\mid \alpha_1\in\mathbb{F}_{q}^*\},\\
V_{2}&=\text{span}_\mathbb{C}\{\chi_{0,\alpha_1,\alpha_2}\mid \alpha_1\in\mathbb{F}_q,\alpha_2\in\mathbb{F}_{q}^*\},\\
V_{3}&=\text{span}_\mathbb{C}\{\chi_{\beta,0,0}\mid \beta\in\mathbb{F}_{q+2}^*\},\\
V_{4}&=\text{span}_\mathbb{C}\{\chi_{\beta,\alpha_1,0}\mid \beta\in\mathbb{F}_{q+2}^*,\alpha_1\in\mathbb{F}_{q}^*\},\\
V_{5,\tilde{\beta}}
&=\text{span}_\mathbb{C}\{\chi_{\beta',\alpha_1',\alpha_2'}\mid \beta'\in\mathbb{F}_{q+1}^*,\alpha_1',\alpha_2'\in\mathbb{F}_q^*,\beta'\varphi^{-1}(-\frac{\alpha_1'}{\alpha_2'})=\tilde{\beta}\}, 
\end{align*}
where $\tilde{\beta}\in\mathbb{F}_{q+1}^*$. 
From the above calculation, $V_i$'s are maximal common eigenspaces of $A_0,A_1,A_2,A_{3,\beta}$ ($\beta\in\mathbb{F}_{q+1}^*$). 
Thus we obtain the following formula for the eigenmatrix.
\begin{theorem}
The first eigenmatrix $P$ of the association scheme is
%\begin{align*}
%P=\begin{pmatrix}
%1 &  q+1 & q^2-1 & \cdots & q^2-1 & \cdots \\
%1 &  -1  & q^2-1 & \cdots & -q+1 & \cdots \\
%1 &  q+1 & -1 & \cdots & -1 & \cdots \\
%\vdots &  \vdots & \vdots & \ddots & \vdots & \ddots \\
%1 &  -1 & -1 & \cdots & q \chi_{q+2}(\frac{\tilde{\beta}}{\beta})+1 & \vdots \\
%\vdots &  \vdots & \vdots & \ddots & \vdots & \ddots 
%\end{pmatrix}.
%\end{align*}

%\begin{align*}
%P=\bordermatrix{
%      & R_0 & R_1 & R_2 & \cdots & R_{3,\beta} & \cdots \cr
%V_0 & 1 &  q+1 & q^2-1 & \cdots & q^2-1 & \cdots \cr
%V_1 & 1 &  -1  & q^2-1 & \cdots & -q+1 & \cdots \cr
%V_2 & 1 &  q+1 & -1 & \cdots & -1 & \cdots \cr
%\vdots& \vdots &  \vdots & \vdots & \ddots & \vdots & \ddots \cr
%V_{3,\tilde{\beta}} & 1 &  -1 & -1 & \cdots & q \chi_{q+2}(\frac{\tilde{\beta}}{\beta})+1 & \vdots \cr
%\vdots& \vdots &  \vdots & \vdots & \ddots & \vdots & \ddots 
%}.
%\end{align*}

\begin{align*}
P=\bordermatrix{
      & R_0 & R_1 & R_2  & R_3 & R_4  & R_{5,\beta}  \cr
V_0 & 1 &  q-1 & q(q-1) & q & q(q-1) & q(q-1) \cr
V_1 & 1 &  q-1 & -q & q & q(q-1) & -q \cr
V_2 & 1 &  -1 & 0 & q & -q+1 & 0 \cr
V_3 & 1 &  q-1 & q(q-1)  & -1 & -q & -q+1 \cr
V_4 & 1 &  -1 & 0 & -1 & 1 & 1 \cr
V_{5,\tilde{\beta}} & 1 &  -1 & 0  & -1 & 1 &q\chi_{q+1}(\frac{\bar{\beta}}{\beta})+1
},
\end{align*}
where $\beta,\tilde{\beta}$ run over the set $\mathbb{F}_{q+1}^*$. 

\end{theorem}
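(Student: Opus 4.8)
The plan is to use that the scheme is a translation scheme on the abelian group $X=\mathbb{F}_{q+1}\times\mathbb{F}_q\times\mathbb{F}_q$, so that, as recalled in Section~\ref{sec:pre}, its first eigenmatrix is recovered from character sums: under the identification of $X^*$ with $X$ sending $(\beta,\alpha_1,\alpha_2)$ to $\chi_{\beta,\alpha_1,\alpha_2}$, the eigenvalue of $A_i$ (resp.\ of $A_{5,\beta}$) on a character $\eta$ equals $\sum_{x\in R_i(0,0,0)}\eta(x)$ (resp.\ $\sum_{x\in R_{5,\beta}(0,0,0)}\eta(x)$). The argument then has four steps: (1)~evaluate these sums for every adjacency matrix and every character of $X$; (2)~partition the characters into subspaces $V_0,\dots,V_4,V_{5,\tilde\beta}$ on which all eigenvalues from step~(1) are simultaneously constant; (3)~deduce that these subspaces are precisely the common eigenspaces, i.e.\ the ranges of the primitive idempotents $E_i$; (4)~read off $P$.

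For step~(1), each $R_i(0,0,0)$ with $i\in\{0,1,2,3,4\}$ is a Cartesian product, over the three coordinates, of a singleton or of a set of the form $\mathbb{F}_r$ or $\mathbb{F}_r\setminus\{0\}$, so the character sum factors into elementary pieces, $\sum_{t\in\mathbb{F}_r}\psi(t)\in\{0,r\}$ and $\sum_{t\in\mathbb{F}_r^*}\psi(t)\in\{-1,r-1\}$; a short case analysis on which of $\beta',\alpha_1',\alpha_2'$ vanish produces the columns $R_0,\dots,R_4$ of $P$. The genuinely computational column is $R_{5,\beta}$: since $\beta\neq 0$ and $\beta''\in\mathbb{F}_{q+1}^*$ force $\varphi(\beta''\beta)\in\mathbb{F}_q$, one has $R_{5,\beta}(0,0,0)=\{(\beta'',\alpha'',\varphi(\beta''\beta)\alpha'')\mid\beta''\in\mathbb{F}_{q+1}^*,\ \alpha''\in\mathbb{F}_q^*\}$, and so for $\eta=\chi_{\beta',\alpha_1',\alpha_2'}$,
\begin{align*}
\sum_{x\in R_{5,\beta}(0,0,0)}\eta(x)=\sum_{\beta''\in\mathbb{F}_{q+1}^*}\chi_{q+1}(\beta'\beta'')\sum_{\alpha''\in\mathbb{F}_q^*}\chi_q\bigl((\alpha_1'+\varphi(\beta''\beta)\alpha_2')\alpha''\bigr).
\end{align*}
I would split this into the cases $\alpha_2'=0$, $\alpha_1'\neq 0=\alpha_2'$, and $\alpha_2'\neq 0$. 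In the first two the inner sum is independent of $\beta''$, so the whole expression is a scalar multiple of $\sum_{\beta''\in\mathbb{F}_{q+1}^*}\chi_{q+1}(\beta'\beta'')$; in the third, the linear form $\alpha_1'+\varphi(\beta''\beta)\alpha_2'$ vanishes for exactly one $\bar\beta\in\mathbb{F}_{q+1}^*$, namely $\bar\beta=\varphi^{-1}(-\alpha_1'/\alpha_2')/\beta$, which contributes $(q-1)\chi_{q+1}(\beta'\bar\beta)$ while each other $\beta''$ contributes $-\chi_{q+1}(\beta'\beta'')$, giving the total $q\chi_{q+1}(\beta'\bar\beta)-\sum_{\beta''\in\mathbb{F}_{q+1}^*}\chi_{q+1}(\beta'\beta'')$.

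For step~(2), partition $X^*$ first according to whether $\beta'=0$ and then according to the vanishing pattern of $(\alpha_1',\alpha_2')$; this yields $V_0,\dots,V_4$, and the characters left over (those with $\beta'\neq 0$ and $\alpha_2'\neq 0$) fall into the $V_{5,\tilde\beta}$ according to the invariant $\tilde\beta:=\beta'\varphi^{-1}(-\alpha_1'/\alpha_2')\in\mathbb{F}_{q+1}^*$. By step~(1) all eigenvalues are constant on each block, the only delicate point being that on $V_{5,\tilde\beta}$ one has $\beta'\bar\beta=\tilde\beta/\beta$, so that $\chi_{q+1}(\beta'\bar\beta)=\chi_{q+1}(\tilde\beta/\beta)$ depends only on $\tilde\beta$ and $\beta$; hence each $V_i$ lies inside a single common eigenspace. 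Since the $q+5$ subspaces $V_i$ form a partition of the orthogonal character basis of $\mathbb{C}^X$ and the eigenvalue rows computed in step~(1) are pairwise distinct ($R_1,\dots,R_4$ separate $V_0,\dots,V_4$ from one another and from the $V_{5,\cdot}$, while the $R_{5,\beta}$-entries separate the $V_{5,\tilde\beta}$ as $\beta$ varies, $\chi_{q+1}$ being nontrivial), no two $V_i$ can sit in one common eigenspace; therefore each $V_i$ is a full common eigenspace and $P$ is as displayed. As a numerical check, the dimensions add up correctly:
\begin{align*}
&\dim V_0+\dim V_1+\dim V_2+\dim V_3+\dim V_4+\sum_{\tilde\beta\in\mathbb{F}_{q+1}^*}\dim V_{5,\tilde\beta}\\
&\qquad=1+(q-1)+q(q-1)+q+q(q-1)+q\cdot q(q-1)=q^2(q+1)=|X|.
\end{align*}

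The main obstacle is the $R_{5,\beta}$ character sum in steps~(1)--(2): handling the case split cleanly, correctly isolating the single exceptional value $\bar\beta$ of $\beta''$ when $\alpha_2'\neq 0$, and — above all — verifying that the outcome is constant on each $V_{5,\tilde\beta}$ via the identity $\beta'\bar\beta=\tilde\beta/\beta$, which is what makes the $R_{5,\beta}$ column of $P$ meaningful. A convenient running check on all the entries is that the multiplicity-weighted column sums $\sum_i(\dim V_i)\,p_{ij}$ must equal $\operatorname{tr}A_j$, hence vanish for $j\geq 1$ and equal $|X|$ for $j=0$.
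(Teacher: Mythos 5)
Your proposal is correct and follows essentially the same route as the paper: both exploit the translation-scheme structure to reduce the eigenvalues to the character sums $\sum_{x\in R_i(0,0,0)}\chi_{\beta',\alpha_1',\alpha_2'}(x)$, carry out the same case split on the vanishing of $\beta',\alpha_1',\alpha_2'$ (isolating the unique $\bar\beta$ with $\alpha_1'+\varphi(\beta\bar\beta)\alpha_2'=0$ in the $R_{5,\beta}$ column), and group the characters into $V_0,\dots,V_4,V_{5,\tilde\beta}$ via the invariant $\tilde\beta=\beta'\varphi^{-1}(-\alpha_1'/\alpha_2')$. Your added checks (row distinctness to confirm maximality of the eigenspaces, and the dimension count summing to $q^2(q+1)$) are sound and in fact slightly more careful than the paper, which asserts maximality without argument.
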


\begin{example}

We describe the construction for prime powers $4,5$.  

Let $\mathbb{F}_4=\{0,1,z,z+1\}$ with $z^2=z+1$ be the finite field of order $4$. 
We regard $\mathbb{F}_4$ as $\mathbb{Z}_2^2$ as the additive group, and  
let $\phi:\mathbb{F}_4\rightarrow GL_4(\mathbb{R});\phi(a+bz)=(r_2)^{a}\otimes (r_2)^{b}$ where $r_2=\left(\begin{smallmatrix}0 & 1 \\ 1 & 0 \end{smallmatrix}\right)$. 
The generalized Hadamard matrix is $H_4=\left(\begin{smallmatrix}0 & 0 & 0 & 0 \\ 0 & 1 & z & z+1 \\ 0 & z & z+1 & 1 \\ 0 & z+1 & 1 & z \end{smallmatrix}\right)$. 
We construct four auxiliary matrices $C_0,C_1,C_z,C_{z+1}$ from $H_4$;  
\begin{align*}
C_0&=\begin{pmatrix}
\phi(0) & \phi(0) & \phi(0) & \phi(0) \\
\phi(0) & \phi(0) & \phi(0) & \phi(0) \\
\phi(0) & \phi(0) & \phi(0) & \phi(0) \\
\phi(0) & \phi(0) & \phi(0) & \phi(0) 
\end{pmatrix},C_1=\begin{pmatrix}
\phi(0) & \phi(1) & \phi(z) & \phi(z+1) \\
\phi(1) & \phi(0) & \phi(z+1) & \phi(z) \\
\phi(z) & \phi(z+1) & \phi(0) & \phi(1) \\ 
\phi(z+1) & \phi(z) & \phi(1) & \phi(0) 
\end{pmatrix},\\
C_z&=\begin{pmatrix}
\phi(0) & \phi(z) & \phi(z+1) & \phi(1) \\
\phi(z) & \phi(0) & \phi(1) & \phi(z+1) \\
\phi(z+1) & \phi(1) & \phi(0) & \phi(z) \\ 
\phi(1) & \phi(z+1) & \phi(z) & \phi(0) 
\end{pmatrix}, 
C_{z+1}=\begin{pmatrix}
\phi(0) & \phi(z+1) & \phi(1) & \phi(z) \\
\phi(z+1) & \phi(0) & \phi(z) & \phi(1) \\
\phi(1) & \phi(z) & \phi(0) & \phi(z+1) \\ 
\phi(z) & \phi(1) & \phi(z+1) & \phi(0) 
\end{pmatrix}. 
\end{align*}
Furthermore, we let $C_x=O_{16}$ where $x$ is an indeterminate. 

Let $\mathbb{F}_5=\{0,1,2,3,4\}$ be the finite field of order $5$. 
We construct four Latin squares $L_1,L_2,L_3,L_4$ from $\mathbb{F}_5$ which are mutually suitable Latin squares with constant diagonal entries. 
\begin{align*}
L_1&=\left(
\begin{array}{ccccc}
 0 & 1 & 2 & 3 & 4 \\
 4 & 0 & 1 & 2 & 3 \\
 3 & 4 & 0 & 1 & 2 \\
 2 & 3 & 4 & 0 & 1 \\
 1 & 2 & 3 & 4 & 0 \\
\end{array}
\right),\quad 
L_2=
\left(
\begin{array}{ccccc}
 0 & 2 & 4 & 1 & 3 \\
 3 & 0 & 2 & 4 & 1 \\
 1 & 3 & 0 & 2 & 4 \\
 4 & 1 & 3 & 0 & 2 \\
 2 & 4 & 1 & 3 & 0 \\
\end{array}
\right),\\
L_3&=\left(
\begin{array}{ccccc}
 0 & 3 & 1 & 4 & 2 \\
 2 & 0 & 3 & 1 & 4 \\
 4 & 2 & 0 & 3 & 1 \\
 1 & 4 & 2 & 0 & 3 \\
 3 & 1 & 4 & 2 & 0 \\
\end{array}
\right),\quad 
L_4=\left(
\begin{array}{ccccc}
 0 & 3 & 1 & 4 & 2 \\
 2 & 0 & 3 & 1 & 4 \\
 4 & 2 & 0 & 3 & 1 \\
 1 & 4 & 2 & 0 & 3 \\
 3 & 1 & 4 & 2 & 0 \\
\end{array}
\right).
\end{align*}
Fix a bijection $\varphi: \mathbb{F}_5\rightarrow \mathbb{F}_4\cup\{x\}$ such that $\varphi(0)=x$. 
We now define the incidence matrices of symmetric group divisible designs $N_{\beta}$ ($\beta\in\mathbb{F}_5^*$) by replacing $\gamma\in\mathbb{F}_4$ in $L_{\beta}$ with $C_{\varphi(\gamma)}$.
\begin{align*}
N_1&=\left(
\begin{array}{ccccc}
 C_x & C_{\varphi(1)} & C_{\varphi(2)} & C_{\varphi(3)} & C_{\varphi(4)} \\
 C_{\varphi(4)} & C_x & C_{\varphi(1)} & C_{\varphi(2)} & C_{\varphi(3)} \\
 C_{\varphi(3)} & C_{\varphi(4)} & C_x & C_{\varphi(1)} & C_{\varphi(2)} \\
 C_{\varphi(2)} & C_{\varphi(3)} & C_{\varphi(4)} & C_x & C_{\varphi(1)} \\
 C_{\varphi(1)} & C_{\varphi(2)} & C_{\varphi(3)} & C_{\varphi(4)} & C_x \\
\end{array}
\right).
\end{align*}

Then the matrices $I_{80}, I_{20}\otimes(J_{5}-I_{5}),I_5\otimes(J_4-I_4)\otimes J_4,(J_5-I_5)\otimes I_{16}, (J_5-I_5)\otimes I_{4}\otimes (J_4-I_4)$, $N_i-(J_5-I_5)\otimes I_{16}$ ($i \in \mathbb{F}_5^*$)
 form a commutative association scheme with $8$ classes. 
The first eigenmatrix $P$ is 

\begin{align*}
P=\bordermatrix{
      & R_0 & R_1 & R_2  & R_3 & R_4  & R_{5,1} & R_{5,2} & R_{5,3} & R_{5,4}  \cr
V_0 & 1 &  3 & 12 & 4 & 12 & 12 & 12 & 12 & 12 \cr
V_1 & 1 &  3 & -4 & 4 & 12 & -4 & -4 & -4 & -4 \cr
V_2 & 1 &  -1 & 0 & 4 & -3 & 0 & 0 & 0 & 0 \cr
V_3 & 1 &  3 & 12  & -1 & -4 & -3 & -3 & -3 & -3 \cr
V_4 & 1 &  -1 & 0 & -1 & 1 & 1 & 1 & 1 & 1 \cr
V_{5,1} & 1 &  -1 & 0  & -1 & 1 & 3w+1 & 3w^2+1& 3w^3+1& 3w^4+1 \cr
V_{5,2} & 1 &  -1 & 0  & -1 & 1 & 3w^3+1 & 3w+1& 3w^4+1& 3w^2+1 \cr
V_{5,3} & 1 &  -1 & 0  & -1 & 1 & 3w^2+1 & 3w^4+1& 3w+1& 3w^3+1 \cr
V_{5,4} & 1 &  -1 & 0  & -1 & 1 & 3w^4+1 & 3w^3+1& 3w^2+1& 3w+1 
},
\end{align*}
where $w=\frac{\sqrt{5}-1}{4}+\sqrt{\frac{-5-\sqrt{5}}{8}}$. 
\end{example}

%%%%%%%%%%%%%%%%%%%%%%%%%%%%%%%%%%%%%%%%%%%%%%%%%%%%%%%%%%%%%%%%%%%%%%%%%%%%%%%%%%%%%%%%%
%%%%%%%%%%%%%%%%%%%%%%%%%%%%%%%%%%%%%%%%%%%%%%%%%%%%%%%%%%%%%%%%%%%%%%%%%%%%%%%%%%%%%%%%%
\noindent {\bf Acknowledgments.}
The authors thank Sara Sasani for some computational help and the referees for their comments. 
Hadi Kharaghani is supported by an NSERC Discovery Grant.  Sho Suda is supported by JSPS KAKENHI Grant Number 15K21075, 18K03395.

%%%%%%%%%%%%%%%%%%%%%%%%%%%%%%%%%%%%%%%%%%%%%

\end{document}